\newtheorem{theorem}{Theorem}[section]
\newtheorem{proposition}[theorem]{Proposition}
\newtheorem{remark}[theorem]{Remark}
\newtheorem{definition}[theorem]{Definition}
\numberwithin{equation}{section}
\newcommand{\Om}{\Omega}
\newcommand{\ep}{\varepsilon}
\newcommand{\R}{\mathbb{R}}
\newcommand{\N}{\mathbb{N}}
\newcommand{\Hs}{\mathcal{H}}
\newcommand{\B}{\mathcal{B}}
\newcommand{\C}{\mathcal{C}}
\newcommand{\xp}{\left( x \right)}
\newcommand{\sdist}{{\rm{sdist}}}
\newcommand{\cngK}{\xrightarrow{\mathcal{K}}}
\newcommand{\Int}{{\rm{Int}}}
\newcommand{\rk}{ \right\}}
\newcommand{\lk}{ \left\{}
\newcommand{\sym}{{\rm sym}}
\newcommand{\dist}{{\rm dist}}
\newcommand{\kn}{{k_n}}
\newcommand{\AH}{ {\rm AH}(\Om)}
\newcommand{\wt}{\widetilde}
\newcommand\restr[2]{{
  \left.\kern-\nulldelimiterspace 
  #1 
  \littletaller 
  \right|_{#2} 
  }}
\tikzset{every picture/.style={line width=0.75pt}} 
\begin{document}
	\sloppy
	\title[Multilayer films]{Solutions for a free-boundary problem modeling  multilayer films with coherent and incoherent interfaces}

	\author[R. Llerena]{Randy Llerena}
	\address[Randy Llerena]{Research Platform MMM ``Mathematics-Magnetism-Materials" - Fak. Mathematik Univ. Wien, A1090 Vienna}
	\email{randy.llerena@univie.ac.at}
	
	\author[P. Piovano]{Paolo Piovano}
	\address[Paolo Piovano]{Dipartimento di Matematica, Politecnico di Milano, P.zza Leonardo da Vinci 32, 20133 Milano, Italy\footnote{MUR Excellence Department 2023-2027}
 }
	\email{paolo.piovano@polimi.it}
	
	\date{\today} 
	\begin{abstract}
		In this paper, we move forward from the results of \cite{LlP} by introducing a variational model for the study of  multilayer films that allows for the treatment of both coherent and incoherent interfaces between layers. The model is designed in the framework of the theory of Stress Driven Rearrangement Instabilities, which are characterized by the competition between elastic and surface energy effects. The surface of each film layer is assumed to satisfy the ``exterior graph condition'' already introduced in \cite{LlP}, for which bulk cracks are allowed to be of non-graph type. By applying the direct method of calculus of variations under a constraint on the number of connected components of the cracks not connected to the surface of the film layers the existence of energy minimizers is established in dimension 2. As a byproduct of the analysis the state of art  on the variational modeling of single-layered films deposited on a fixed substrate is advanced  by letting the substrate surface free, by addressing the presence of multiple layers of various materials, and by including the possibility of delamination  between the various film layers.

		 \end{abstract}
	
	\subjclass[2010]{49J10, 49Q15, 49Q20, 35R35 , 74A45, 74A50 , 74G65}
	\keywords{free boundary problem, surface energy, delamination, elastic energy, deformable layers, multiphase morphology, multilayer films}
	
	\maketitle
	
	\tableofcontents
	
	\section{Introduction}

In this manuscript, we address the problem of modeling the morphology of multilayered film composites consisting of different crystalline materials deposited on a substrate. 
The goal is to advance the literature on the variational modeling of single-layered films deposited on a fixed substrate \cite{ChB, DP, DP2, FFLM2}   in a twofold direction: on the one hand, by letting the substrate surface free and by addressing the presence of multiple layers of various materials, and, on the other hand, by including into the analysis the possibility of a failure of the film coatings, since,  as described in \cite{SroloviztAnderson} for the case of some oxide films, the compressive stresses generated during film growth can lead to the delamination (and the buckling) between different layers.

Nowadays film-based nanostructures find several applications, in particular for the manufacturing of electronic and photonic devices, such as for the creation of their semiconductor components, and of solar and photovoltaic cells. The great interest that films  and, in particular, multilayer films \cite{SRS,Tersoff2,Tersoff1}, created by vapor deposition of different material constituents,   continue attracting is due to the fact that, as they are self-assembled heterostructures, their employment represents  one of the nanostructure design methods with most  feasibility potential; 
therefore, any advancement in the mathematical modeling of film and multilayer film materials can have an important practical impact for their design control. Examples of multilayer films that are used for optoelectronic applications are multiple quantum well structures with alternating compressive and tensile strained layers, and short-period \emph{quantum-dot superlattices}. Also for the latter, as described in \cite{Tersoff2,Tersoff1}, it is really the superposition of various layers of materials that allows to reach the highest degree order needed for the applications with respect to the size, the density and the distribution of the quantum dots.

The adopted strategy consists in combining the implementation to the multiphase setting of the film models considered in \cite{ChB, DP, DP2, FFLM2}, in which delamination is not taken into account, with the recent results for a two-phase setting of \cite{LlP}, in which the interfaces between phases are instead allowed to present both coherent and incoherent portions. With coherency here we intend a microscopic organization of atoms that can be regarded as a (possibly deformed) uniform lattice that is homogeneous through the interface, while with incoherency we refer to the presence of debonding and delamination at the interface. 
In this way the extension of the single-layer literature to the multilayer setting (with possible delamination at each layer interface) is performed within the theory of stress driven rearrangement instabilities (SDRI) \cite{AT,D,G,S}, which was also at the basis of the variational single-layer models introduced in \cite{spencer1,spencer2} and analytically validated in \cite{ChB,DP,DP2,FFLM2}. In fact, as in \cite{spencer1, spencer2} for thin films, and more generally for free crystals in \cite{KP, KP1,KP2}, we model the mismatch between the free-standing equilibrium lattices of the materials of each pair of film layers and of the first layer with the substrate by means of the so-called \emph{mismatch strain} in the elastic energy. As described by the SDRI theory the lattice mismatch is responsible for the migration of the atoms of each phase from their crystalline order, since the lattice mismatch induces large stresses in the bulk material and, in order to release the related elastic energy, the atoms move forming corrugations, cracks,  and other interface instabilities  \cite{AT,G,D,S}.

In regard to the literature results for settings with phase interfaces exclusively assumed to be coherent, we refer to the literature on the optimal shape of partitions in the absence of elastic effects, which was initiated by  Almgren in \cite{Algrem}, who formulated the problem in $\mathbb{R}^d$, for $d>1$,  for surface tensions proportional at each interface. By working in the framework of \emph{integral currents} of geometric measure theory he singled out a condition referred to as ``partitioning regularity", that ensures the lower semicontinuity of the overall surface energy with respect to the $L^1$-convergence of the sets in the partition. 
Then, Ambrosio and Braides expanded the scope in \cite{AB,AB2} by including also non-proportional surface tensions and by introducing an integral condition called $BV$-\emph{ellipticity}, which they proved to be both sufficient and necessary for the $L^1$-lower semicontinuity. Afterwards, various other conditions have been introduced and studied, such as $B$-\emph{convexity} and \emph{joint convexity}, in the attempt of finding a more practical condition than $BV$-\emph{ellipticity}, as the latter can be challenging to be verified as it represents the analogous of Morrey’s \emph{quasi-convexity} condition in the setting of \emph{Caccioppoli partitions}. $BV$-ellipticity though remains the only known condition characterizing the the $L^1$-lower semicontinuity apart from specific contexts (we refer to \cite{cara2, Morgan} for more details), and the fact that it coincides with the \emph{triangle-inequality condition}, which is simpler to check, for the case with 3 phases \cite{AB,AB2}. 
Finally, in \cite{friesolo} the analogous version of the  $BV$-ellipticity condition in the framework of $BD$-\emph{spaces} has been studied.

Instead, in regard to the settings with only incoherent interfaces, we refer to the results obtained with respect to the related  \emph{Mumford-Shah} problem for also the application to \emph{image segmentation}, which was actually originally introduced in  \cite{MS} as a  multiphase formulation. 
 In this context, interfaces represent the contours of the image color areas that can be characterized as the discontinuity set of an auxiliary state function. We refer to \cite{AFP,DMS1} for existence and \emph{Ahlfors-type regularity} results in the context of a single phase, which has been then extended also to the \emph{Griffith model} in fracture 
mechanics in the context of linear elasticity with respect to vectorial state functions representing the bulk displacement of crystalline materials  \cite{CC1,FrMa}. Finally, Bucur, Fragal\`a, and Giacomini addressed the original multiphase setting of  \cite{MS} in \cite{BFG} and \cite{BFG2} by providing a rigorous mathematical formulation with incoherent interfaces (see also \cite{CTV} for a related multiphase boundary problem for reaction-diffusion systems).

In \cite{BFG} they recover Ahlfors-type regularity results for an  \emph{ad hoc} nonstandard notion of multiphase local almost-quasi minimizers for an energy accounting for the incoherent portions of each interface and disregarding the contribution of the remaining coherent portions. 
Afterwards, in \cite{BFG2} the same Authors introduced what they refer to as the \emph{multiphase {M}umford-{S}hah problem}, that is characterized by the sum of possibly different Mumford-Shah-type energy contributions, each related to a different phase, to which an extra term (justified on statistical reasons) is added. Such extra term is needed as otherwise minimizing configurations would present a single phase. However, in \cite{BFG2} coherent interfaces are not counted in the energy as ``no-jump interface portions'' along the reduced phase boundary are weighted in each phase energy in the same way as the jump portions.

To include in our model the interplay between coherency and incoherency, by allowing each phase interface to present also both coherent and incoherent portions, we adopt the strategy initiated in \cite{LlP} for the setting with a film phase deposited on a substrate. Since the results in \cite{LlP} regards $d=2$ and were achieved under a so-called  \emph{exterior graph constraint} on the substrate surface, in order to implement those results to multiple film phases, we also restrict to $d=2$ and we assume on both the substrate surface and the film profiles the exterior graph constraint. We notice that even in the presence of such condition internal cracks in each film layer and in the substrate are allowed to be also of non-graph type. 

We denote by $\Om:=(-l,l)\times (-L,+\infty)$ for positive parameters $l,L \in \R$ the region where the multilayer film and the substrate are located, and, given $\alpha\in\mathbb{N}$, we denote a multilayered film composite with $\alpha$ layers on top of the substrate phase $S_0$, which is also denoted in the following as the \emph{0th layer}, by $S_\alpha$. Furthermore, for each $j\in\{0,\dots,\alpha\}$ we assume that the profile of each \emph{$j$th layer} is parametrizable by a \emph{height function} $h^j:[-l,l] \mapsto [-L,+\infty)$ measuring the thickness of the profile of \emph{$j$th composite} $S_j$,  i.e., the $j$-layered film composite including all $i$th layers for $i\in\{0,\dots,j\}$, by assuming that $h^{j-1}\leq h^{j}$ for $j\in\{1,\dots,\alpha\}$. We also denote by $K^j \subset \overline{\Int(S_{h^j})}$, where $S_{h^j}$ is  the subgraph of $h^j$, the \emph{cracks of the  $j$th composite}, which are assumed such that $K^j\cap \Int(S_{h^{j-1}})\subset K^{j-1}$, so that then for $j\in\{1,\dots,\alpha\}$ the \emph{$j$-composite} $S_j$ coincides with 
$$
S_{h^j,K^j}:=S_{h^j} \setminus  K^j,
$$
and the $j$th film layer coincides with   $S_{h^j,K^j} \setminus S^{(1)}_{h^{j-1},K^{j-1}}$ (see Figure \ref{mlfig:my_label}). 
It follows that there is no formal distinction in the hypotheses taken on the substrate phase $S_0$ and the one taken on each $j$th film composite $S_{h^j,K^j}$ (apart from the fact of being contained in all of them).

In particular, by writing that $(h^j,K^j)\in \text{AHK}(\Om)$ we assume that each $j$th layer height function $h^j$ is an upper semicontinuous function with bounded pointwise variation and each $j$th composite crack set $K^j$ is a closed $\mathcal{H}^1$-rectifiable set with finite $\mathcal{H}^1$ measure. 
More precisely, we denote the family $\B^\alpha$ of admissible multilayered film composites in $\Omega$  with $\alpha$ layers (on the  substrate layer), as a $(\alpha+1)$-tuple of all the $j$th composites $S_{h^j,K^j}$ for $j\in\{0,\dots,\alpha\}$, namely
 \begin{align*}
		\B^\alpha:= \{ (S_{h^\alpha,K^\alpha}, \ldots, S_{h^0,K^0}) :   &\text{ $(h^j,K^j)\in \text{AHK}(\Om)$, $h^{j-1}\leq h^{j}$,  $K^j \subset \overline{\Int(S_{h^j})}$,}\\
  &\quad\,\,\text{ $K^j\cap \Int(S_{h^{j-1}})\subset K^{j-1}$ for $j\in\{1,\dots,\alpha\}$}  \}.
\end{align*}

Furthermore, by following  the SDRI theory \cite{AT,D,G,spencer1,S} 
and in the analogy with the single-layer film setting \cite{ChB,DP,DP2,FFLM2,LlP},  we define the family of \emph{admissible configurations} $\C^\alpha$ by
$$
\mathcal{C}^\alpha  := \{(S_{h^\alpha,K^\alpha}, \ldots, S_{h^0,K^0},  u): (S_{h^\alpha,K^\alpha}, \ldots, S_{h^0,K^0}) \in \B^\alpha ,\, u \in \mathrm{H}^1_{\mathrm{loc}} (\Int(S_{h^\alpha,K^\alpha}))\},
$$
where the functions $u$ represent the \emph{bulk displacements} in the multilayered film composites, and we consider a  configurational  energy $\mathcal{F}_\sigma^\alpha: \C \to [-\infty, \infty]$ given by the sum of an elastic energy $\mathcal{W}$ and a surface energy $\mathcal{S}_\sigma^{\alpha}$, namely,
$$
	\mathcal{F}_\sigma^\alpha (S_{h^\alpha,K^\alpha}, \ldots, S_{h^0,K^0},  u) : = \mathcal{W}({S_{h^\alpha,K^\alpha}, \ldots, S_{h^0,K^0},}
 u)  + \mathcal{S}^\alpha_\sigma (S_{h^\alpha,K^\alpha}, \ldots, S_{h^0,K^0})
$$
for any $(S_{h^\alpha,K^\alpha}, \ldots, S_{h^0,K^0},  u)\in \C^\alpha$. The parameter $\sigma\in\{s,t\}$ is introduced to differentiate the setting in which each layer is assumed to interact with all the other $\alpha$ layers of the composite for $\sigma=t$ from the setting in which each $j$th layer interacts only with the two layers surrounding it (if present), i.e.,   with the  $(j+1)$th layer located directly above and the  $(j-1)$th layer directly underneath, for $\sigma=s$. Let us  refer in the following to $\mathcal{F}_\sigma^\alpha$ (and to $\mathcal{S}_\sigma^\alpha$) as the \emph{$\alpha$-layered total and sequential (surface) energy} for $\sigma=t$ and $\sigma=s$, respectively.
 
The elastic energy $\mathcal{W}$ is defined in $\C^\alpha$ by 
$$
	 \mathcal{W}({S_{h^\alpha,K^\alpha}, \ldots, S_{h^0,K^0},}
 u) : = \int_{S_{h^\alpha, K^\alpha}} W(x, Eu(x) - {E_0^\alpha} (x)) \, dx,
$$
where the elastic density $W$ denotes the quadratic form
	\begin{equation*}
		W \left( x, M \right) := \mathbb{C}\xp M : M,
	\end{equation*}
	defined for the fourth-order tensor $\mathbb{C}: \Om \to \mathbb{M}^2_\sym$, $E$ denotes the symmetric part of the gradient, i.e., $E(v) := \frac{\nabla v + \nabla^T v}{2}$ for any $v \in H^1_\mathrm{{loc}}(\Int (A); \R^2)$ for a set $A$, and represents the \emph{strain}, and ${E_0^\alpha}$ is
	the mismatch strain $x \in \Om \mapsto {E_0^\alpha} \xp \in \mathbb{M}^{2}_\mathrm{{sym}} $ defined as
	 \begin{equation*}
		{ E_0^\alpha} := \begin{cases}
			E(u_0^\alpha) & \text{in } \Om \setminus S_{h^{\alpha-1}}, \\
            E(u_0^i) & \text{in } \Int(S_{h^i}) \setminus S_{h^{1-1}} \text{ for $i = 1 , \ldots, \alpha-1$}  \\
            0 & \text{in } \Int (S_{h^0,K^0}),
		\end{cases} 
	\end{equation*}
	with respect to  fixed $\alpha$ functions  $u_0^i  \in H^1(\Om; \R^2)$ for $i\in\{1,\dots, \alpha\}$. 

 Both the surface energies $\mathcal{S}_\sigma^\alpha$ for $\sigma=s,t$ are given as  sums of  pairwise contributions $\mathcal{S}^{(i,j)} : \B^1 \to [0, \infty]$  for $0\le i < j \le \alpha$ defined by 
\begin{equation}\label{ijcontributions}\mathcal{S}^{(i,j)}(S_{h^j,K^j}, S_{h^i,K^i})  := \int_{\partial S_{h^i,K^i} \cup \partial S_{h^j,K^j}  } {\psi_{i,j} (z, \nu) \, d\Hs^{1}},
\end{equation}
for every admissible multilayered composite $(S_{h^\alpha,K^\alpha}, \ldots, S_{h^0,K^0}) \in \B^\alpha $, where $\psi_{i,j}$ denotes the anisotropic surface tension that takes different definition with respect to the various portions of  $\partial S_{h^i,K^i} \cup \partial S_{h^j,K^j}$. More precisely, in order to properly define $\psi_{i,j}$ we  consider the three surface tensions $\varphi_{i},\, \varphi_{j},\,  \varphi_{ij}: \overline{\Om} \times \R^2\to [0,\infty]$ characterizing the vapor-$i$th layer interface, the vapor-$j$ layer interface, and the $i$th layer-$j$ layer interface.
Moreover, in order to address both the wetting and dewetting regimes with respect to the materials of each pair of film layers, we introduce two additional surface tensions for each pair $(S_{h^j,K^j}, S_{h^i,K^i})$, denoted as the \emph{$i,j$ regime surface tensions}, which are defined as follows:
 $$
\varphi^1_{ij} := \min\{\varphi_i, \varphi_j + \varphi_{ij} \}\qquad\text{and}\qquad \varphi^2_{ij} : = \min\{\varphi_i,\varphi_j\},
$$
in analogy to the definitions given in \cite{LlP} for two-phase setting. 
It follows that
\begin{equation*}
		\psi_{i,j}(x, \nu(x)) :=  \begin{cases} 
  \varphi_{j}(x, \nu_{S_{h^j,K^j}} (x))  & x \in \Om \cap (\partial^* S_{h^j,K^j} \setminus \partial^* S_{h^i,K^i})
  \\
		\varphi^1_{ij} (x, \nu_{S_{h^j,K^j}}(x)) & x \in \Om \cap   \partial ^*S_{h^i,K^i} \cap \partial^* S_{h^j,K^j}, \\
  \varphi_{ij} (x, \nu_{S_{h^i,K^i}} \xp ) & x \in\Om \cap (\partial ^*S_{h^i,K^i} \setminus \partial S_{h^j,K^j}) \\
  (\varphi_j + \varphi^1_{ij}  )(x, \nu_{S_{h^j,K^j}} (x) ) & x \in\Om \cap  \partial ^*S_{h^i,K^i} \cap \partial S_{h^j,K^j}  \cap S_{h^j,K^j}^{(1)},\\
   2\varphi_j(x, \nu_{S_{h^j,K^j}} \xp ) 
   & x \in \Om \cap \partial S_{h^j,K^j} 
   \cap  S_{h^j,K^j}^{(1)} \cap  S_{h^i,K^i}^{(0)},  \\
   2 \varphi^2_{ij}(x, \nu_{S_{h^j,K^j}} \xp )
   & x \in  \Om \cap \partial {S_{h^j,K^j}} 
   \cap {S_{h^j,K^j}}^{(0)},  \\
   2\varphi_{ij} (x, \nu_{S_{h^i,K^i}} \xp)
    & \begin{aligned}
        x \in  \Om &\cap (\partial S_{h^i,K^i} \setminus \partial S_{h^j,K^j}) \\
        &\cap \left(S_{h^i,K^i}^{(1)} \cup S_{h^i,K^i}^{(0)}\right)  \cap S_{h^j,K^j}^{(1)},
    \end{aligned}   \\
     \varphi^1_{ij} (x, \nu_{S_{h^j,K^j}}(x)) 
     & x \in  \Om \cap \partial S_{h^i,K^i} \cap \partial S_{h^j,K^j} \cap S_{h^i,K^i}^{(1)},  
 \end{cases} 
	\end{equation*}
 where, given a set $U\subset\mathbb{R}^2$, $\nu_U$, $\partial^*U$, and $U^{(\alpha)}$  denote, when well defined, the outward pointing normal to $\partial U$, the \emph{reduced boundary}, and the set of points of density $\alpha\in[0,1]$, respectively. Notice that if $\alpha =1$ the energy $\mathcal{S}^{(0,1)}$ coincides with the surface energy defined in \cite{LlP} as, by following the notation of \cite{LlP} we have that $\varphi_0 := \varphi_{\mathrm{S}}, \varphi_1  := \varphi_{\mathrm{F}}, \varphi_{01}  := \varphi_{\mathrm{FS}}$ and as a consequence $ \varphi^1_{01} = \varphi $ and $\varphi^2_{0,1} = \varphi'$. More precisely, the \emph{$\alpha$-layered surface energies}  $\mathcal{S}_\sigma^\alpha : \B^\alpha \to [- \infty, \infty]$ are given for $\sigma=s,t$ by 
$$
\mathcal{S}^{\alpha}_s(S_{h^\alpha,K^\alpha}, \ldots, S_{h^0,K^0}) :=\sum_{j=1}^{\alpha}\mathcal{S}^{(j-1,j)}(S_{h^j,K^j}, S_{h^{j-1},K^{j-1}}), 
$$
and
$$
\mathcal{S}_t^{\alpha}(S_{h^\alpha,K^\alpha}, \ldots, S_{h^0,K^0}) :=\sum_{j=1}^{\alpha}\sum_{i=0}^{j-1} \mathcal{S}^{(i,j)}(S_{h^j,K^j}, S_{h^i,K^i}),
$$
respectively. Notice that we also address the more general case in which the surface density $\psi_{i,j}$ in \eqref{ijcontributions}  are not interconnected  for different pairs of indexes  $0\le i < j \le \alpha$  
(see Remarks \ref{moregeneral_settings} and
\ref{moregeneral_settings_results} for more details).

It was observed in  \cite{KP, LlP} that the family $\B^1$ lacks compactness with respect to the signed distance convergence. In order to overcome this issue and being able to apply  Go{\l}\k ab's’s Theorem \cite{G} to recover compactness, we impose a constraint $m_j\in\N$ on the number of connected components of the cracks of the $j$th composite that are not connected to the $j$th layer for $j=0,\dots,\alpha$. Therefore, we restrict to the family of configurations $\C^\alpha_{\bm{m}} \subset \C^\alpha$ for which such constraints hold, where $\textbf{m}:=(m_0, \ldots, m_\alpha)\in \N^{\alpha+1}$.


\begin{figure}[ht]
     \centering
     \includegraphics{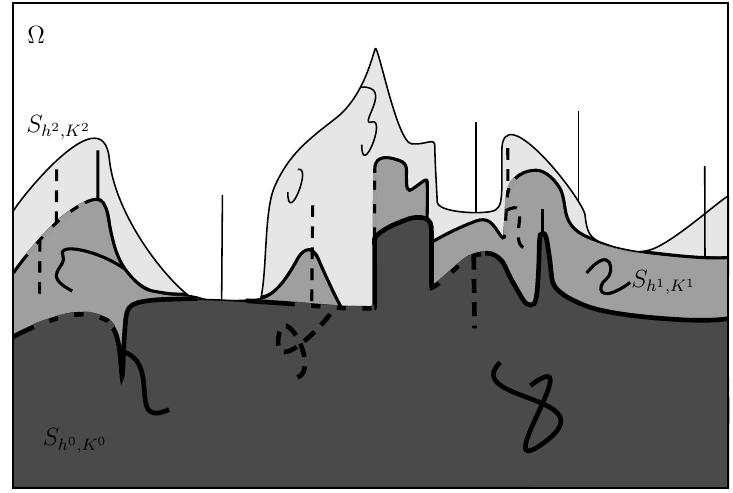}
     \caption{\small A multilayered film composite with 2 layers (on the substrate 0th layer $S_{h^0,K^0}$) associated to an admissible configuration $(S_{h^2,K^2}, S_{h^1,K^1}, S_{h^0,K^0},u)\in\C^2_\textbf{m}$   (see Definition \ref{def:multilayers}) is represented by indicating each $j$th  layer with a gray color with decreasing value with respect to the increasing order of the index $j=0,1,2$. Furthermore, the $j$th layer is indicated with a thinner line with respect to the increasing order of the index $j=0,1,2$, and for the $0$th and 1st layer we distinguish between their coherent and incoherent portions by using a dashed or a continuous line, respectively. }
     \label{mlfig:my_label}
 \end{figure}

 The main goal of the paper is to prove that, given   $\alpha\in\mathbb{N}$, $\sigma\in\{s,t\}$, and a  family of area constraints $\{ \mathbbm{v}_j\}_{i=0}^{\alpha} \subset  [\mathcal{L}^2(\Om)/2, \mathcal{L}^2(\Om)]$ for each  $j$th composite, the minimum problem 
 \begin{equation}
				\label{mleq:constintr}
	\inf_{\tiny  \begin{aligned}
	&(S_{h^\alpha,K^\alpha}, \ldots, S_{h^0,K^0},u) \in \mathcal{C}^\alpha_\mathbf{m} , \\ 
	& \vspace{-0.01cm} \mathcal{L}^2({S_{h^j,K^j}})= \mathbbm{v}_j,\, \text{for }j = 0, \ldots, \alpha 
	\end{aligned} }{\mathcal{F}_\sigma^\alpha (S_{h^\alpha,K^\alpha}, \ldots, S_{h^0,K^0},u)} .
			\end{equation}
   admits a solution. 
   
 To do that we employ the \emph{Direct Method} of Calculus of Variations that consists in finding a proper topology $\tau_{\mathcal{C}^\alpha}$ weak enough to prove compactness in $\mathcal{C}^\alpha_\mathbf{m} \subset \mathcal{C}^\alpha $ and strong enough to have lower semicontinuity of $\mathcal{F}^\alpha$ in $\mathcal{C}^\alpha_\mathbf{m}$. The  topology  $\tau_{\mathcal{C}^\alpha}$ that we consider is the one for which  the convergence
 $$(S_{h^\alpha_k,K^\alpha_k}, \ldots, S_{h^0_k,K^0_k},u_k) 
 \xrightarrow[k\to\infty]{\tau_{\C^\alpha}}
(S_{h^\alpha,K^\alpha}, \ldots, S_{h^0,K^0},u)$$
is equivalent to 
   \begin{align*}
&\begin{cases}
\text{for every $i = 0 , \ldots , \alpha $, $\sup_{k\in\mathbb{N}}{ \Hs^1 \left( \partial S_{h^i_k,K^i_k} \right)} < \infty$,}\\
\text{$\mathrm{ sdist} \left( \cdot, \partial S_{h^i_k,K^i_k} \right) \xrightarrow[k\to\infty]{} \mathrm{ sdist} \left( \cdot, \partial S_{h^i,K^i} \right)$ locally uniformly in $\R^2$ and} \\
u_k \xrightarrow[k\to\infty]{} u \quad\text{a.e.\ in $\mathrm{{Int} \left(S_{h^{\alpha},K^\alpha}\right)}$,}
\end{cases}
\end{align*}
 where the \emph{signed distance function} is defined for any $E \subset \R^2$ as follows
	$$
	    \sdist(x, \partial E) := \begin{cases}
	        \dist(x, E) & \text{if } x \in \R^2 \setminus E, \\
	        -\dist(x, E) & \text{if } x \in  E.
	    \end{cases}
	$$

   For the compactness we implement in the multilayer setting the compactness results proven for the substrate in \cite{LlP}, which were based on \cite{ChB,DP,DP2, FFLM2} (with the difference that instead of lower semicontinuous graph we assume an upper semicontinuity property). 
   We notice that in order to include incoherency in the setting of \cite{ChB,DP,DP2, FFLM2} we implement for multilayers the setting of \cite{LlP}, where in the elements in the $(\alpha+1)$-tuple are not each film layers, but the $j$th composites. In particular, this allows to include in the model also the possible presence of a countable island of one material onto the other layers. In order to establish the lower semicontinuity property we instead proceed by induction by directly using the lower semicontinuity result of \cite{LlP} for the basis of the induction. 

   We conclude by describing the organization of the paper. In Section \ref{notation} we state the notation and recall fundamental definitions used throughout the paper. In Section \ref{mathset} we introduce the model and the main results of the paper. 
   In Section \ref{singlelayer} we prove the existence of minimizers for single-layer films with delamination. Finally, in Section \ref{multisection} we prove the existence result for the minimum problem \eqref{mleq:constintr} with a finite number $\alpha$ of layers over the substrate 0th layer. 
	

 \section{Notation}
 \label{notation}
	In this section, we collect the relevant notation used throughout the paper.
	\subsection*{Linear algebra}
 	We consider the orthonormal basis $\left\{ \mathbf{e}_\mathbf{1}, \mathbf{e}_\mathbf{2} \right\} =  \left\{ (1,0), (0,1) \right\}$ in $\R^2$ and indicate the coordinates of points $x$ in $\R^2$ by $(x_1,x_2)$. We indicate by $a \cdot b:=\sum_{i=1}^2 a_i b_i$ the Euclidean scalar product between points $a$ and $b$ in $\R^2$, and we denote the corresponding norm by $|a|:=\sqrt{a \cdot a}$. 
	
	Let $\mathbb M^2$ be the set of $(2 \times 2)$-matrices and by $\mathbb M^{2}_{\rm sym}$ the space of symmetric $(2 \times 2)$-matrices. The space $\mathbb M^2$ is endowed with Frobenius inner product $E:F:= \sum_{i,j =1}^{2} E_{ij} F_{ij}$ and, with a slight abuse of notation, we denote the corresponding norm by $|E|:=\sqrt{E:E}$. 
	
	\subsection*{Topology}
	Since the model considered in this manuscript is two-dimensional, if not otherwise stated, all the sets are contained in $\R^2$. 
    For any set $E \subset \R^2$, we denote by $E^c$ the complement of $E$ and 
by $\Int(E)$, $\overline{E}$ and $\partial E$  interior, the closure and the topological boundary of $E$, respectively. 

	Finally, let 
	$\dist(\cdot, E)$ and $\sdist(\cdot, \partial E)$ be the \emph{distance} function from $E$ and the \emph{signed distance} from $\partial E$ respectively, where we recall that $\sdist(\cdot, \partial E)$ is defined by 
	\begin{equation*}
		\sdist(x, \partial E):= \begin{cases}
			\dist (x, E) & \textrm{if } x \in \R^2 \setminus E, \\
			-\dist (x, E) & \textrm{if } x \in E
		\end{cases}
	\end{equation*}
	for every $x \in \R^2$.
	\subsection*{Geometric Measure theory}
	We denote by $\mathcal{L}^2{(B)}$ the $2$-dimensional Lebesgue measure of any Lebesgue measurable set $B \subset \R^2$ and by $\mathbbm{1}_B$ the characteristic function of $B$. For $\alpha \in \left[0, 1 \right]$ we denote by $B^{(\alpha)}$ the set of points of density $\alpha$ in $\R^2$, i.e.,  
	$$ B^{(\alpha)} := \lk x \in \R^2: \lim_{r \to 0}{ \frac{\mathcal{L}^2({B \cap B_r \xp})}{\mathcal{L}^2({B_r \xp})} } = \alpha \rk. $$ 
	
	We denote the distributional derivative of a function $f \in L^1_{{\rm loc}}(\R^2)$ by $Df$ and define it as the operator $D: C^\infty_c (\R^2) \to \R$ such that
	$$\int_{\R^2}{Df \cdot \varphi} =- \int_{\R^2}{f \cdot  \grad \varphi \, dx}, $$
	for any $\varphi \in C^\infty_0 (\R^2)$.

	We denote with $\Hs^1$ the 1-dimensional Hausdorff measure. 
	We say that $K \subset \R^2$ is $\Hs^1$-rectifiable if $0 <\Hs^1 (K) < +\infty$ and $\theta_* (K,x) = \theta^* (K,x)=1 $ for $\Hs^1$-a.e. $x\in K$, where
	$$ \theta_* (K,x) :=  \liminf_{r \to 0^+}{ \frac{\Hs^1 (K \cap B_r \xp) }{2r} } \quad \textrm{and} \quad \theta^* (K,x):=\limsup_{r \to 0^+}{ \frac{\Hs^1 (K \cap B_r \xp) }{2r} }. $$ 
	We define sets of finite perimeter as in \cite[Definition 3.35]{AFP} and the reduced boundary $\partial ^* E$ of a set $E$ of finite perimeter by
	\begin{equation}
		\label{mlreducedboundary}
		\partial^* E := \lk  x \in \R^2: \exists \nu_E \xp := - \lim_{r \to 0} {\frac{D \mathbbm{1}_{E}(B_r \xp)}{\abs{D \mathbbm{1}_ {E} }(B_r \xp)}  }, \abs{\nu_E \xp } =1 \rk,
	\end{equation}
	where we refer to $\nu_E \xp$ as the measure-theoretical unit normal at $x \in \partial E$.

	For any set $E \subset \R^2$ of finite perimeter, by \cite[Corollary 15.8 and Theorem 16.2]{M} it yields that
	\begin{align}
		\Hs^{n-1}(E^{(1/2)} \setminus \partial^* E) = 0 \quad \text{and} \quad \partial ^* E \subset E^{(1/2)}. \label{mleq:differenceonE_halfreducedboudnary} 
	\end{align}
	Moreover, for any set $E \subset \R^2$ of finite perimeter, we have 
		\begin{equation}
			\label{mleq:decompositionE} 
			\partial E = N \cup \partial^* E \cup (E^{(1)} \cup E^{(0)}) \cap \partial E, 
	\end{equation}
	where $N$ is a $\Hs^1$-negligible (see \cite[Section 16.1]{M}). 
	
	\subsection*{Functions of bounded pointwise variation}
	Given a function $h:[a,b] \to \R$ we denote the pointwise variation of $h$ by
	$$ \textrm{Var}\, h := \sup{ \left\{ \sum_i^n \abs{ h(x_i) - h(x_{i-1})}: P:= \{ x_0, \ldots , x_n\} \ \textrm{is a partition of $[a,b]$}  \right\} } . $$
	
	We say that $h:(a,b) \to \R$ has finite pointwise variation if $\textrm{Var}\, h < \infty.$ We recall that for any function $h$ such that $\textrm{Var}\, h < \infty$, $h$ has at most countable discontinuities and there exists $h(x^\pm):= \lim_{z \to x ^\pm}h(z)$. In the following given a function $h: [a,b] \to \R$ with finite pointwise variation, we define
	$$ h^{-} (x):= \min \{ h (x^{+} ), h(x^{-} )\} = \liminf_{z \to x} h(z) $$
	and 
	$$h^{+} (x):= \max \{ h (x^{+} ), h(x^{-} )\} = \limsup_{z \to x} h(z). $$

In view of \cite[Corollary 2.23]{L} and with slightly abuse of notation, the limits
\begin{equation}
    \label{mleq:limitsh}
    h^+(a):= \lim_{x \to a^-} h(x) \quad \text{and} \quad h^-(b):=\lim_{x \to b^-} h(x)
\end{equation}
are finite. 


\section{Mathematical setting and main results}
\label{mathset}
\subsection{Multilayer model}
In this section, we introduce the family of admissible regions with finite number of composite layers and the respective family of admissible configurations. 
	Let $\Om:= (-l,l)\times(-L,{ \infty}) \subset \R^2$ for positive parameters $l,L\in \R$.
	
Analogously to \cite{LlP}, we assume a \emph{graph-crack} constraint of the composite of layers, in other words, we consider a \emph{graph constraint} on the strict epigraph of the composite of layers, while inside of it, we consider \emph{cracks} as closed and  $\Hs^1$-rectifiable sets of $\Om$, roughly speaking, the profile of the composite is given by a function representing its thickness, plus a countable number of external vertical filaments and internal cracks.
More precisely, we consider the \emph{family of admissible heights} $\text{ AH}(\Om)$ 
defined by 
	\begin{align}
		\text{ AH}(\Om) := \left\{  h : [-l,l] \to [0,L]:  h \text{ is upper semicontinuous and } \text{ Var}\, h < \infty 
		\right\} \label{mleq:definitionAH}
		\end{align}
and let $S_h$ denote the closed subgraph  with height $h  \in \text{ AH}(\Om)$, i.e., 
\begin{equation}
	\label{mleq:uppergraphS}
	S_h:= \{ (x,y): -l<x<l, y\le h(x) \}.
\end{equation}
Furthermore, we define the \emph{family of admissible cracks} $\text{AK}(\Om)$  by
\begin{equation}
    \label{mleq:definitioncracksS}
   \text{ AK} (\Omega) := \{ K \subset \Om : \text{ $K$ is a closed set in $\R^2$, 
    $\Hs^1$-rectifiable and } \Hs^1(K) < \infty \}
\end{equation}
and the \emph{family of pairs of admissible heights and  cracks} $\text{AHK}(\Om)$ by
\begin{equation}
    \label{mleq:definitionAHK}
   \text{ AHK} (\Omega) := \{ (h,K)\in \text{ AH}(\Om)\times\text{ AK}(\Om): \,  K \subset \overline{\Int(S_h)} \}.
\end{equation}
Finally, given $(h,K)\in \text{AHK}(\Om)$ we refer to the region characterized as the subgraph  of the height function $h$ without the internal cracks of $K$, namely, 
\begin{equation}
	\label{mleq:substrateheight}
	S_{h,K}  := ({S_h} \setminus K) 
	\cap \Omega,
\end{equation}
as the \emph{(generalized) subgraph with height $h$ and cracks $K$},  
and  we define the \emph{family of admissible subgraphs} as 
\begin{equation}
	\label{mleq:definitionAS}
	\text{ AS}(\Om):= \{S \subset \Om\,:\,  \text{$S=S_{h,K}$ for a pair $(h,K) \in \text{ AHK} (\Omega)$}\}.
\end{equation}

We observe that for every $(h,K) \in \text{AHK} (\Omega)$
\begin{equation}
	\label{mleq:uniongraphs}
	\overline{S_{h,K}} = S_h, \quad \Int(S_{h,K}) = \Int(S_h) \setminus K \quad  \text{and}\quad \partial {S_{h,K}} =\partial {S_h} \cup K .
 \end{equation}
We have that 
$\partial {S_h}$ 
is connected and, $\partial S_h$ and $\partial S_{h,K}$ have finite $\Hs^1$-measure. By \cite[Lemma 3.12 and Lemma 3.13]{F}, for any $h \in \text{AH}(\Om)$, $\partial S_h$ is rectifiable and applying the Besicovitch-Marstrand-Mattila Theorem (see \cite[Theorem 2.63]{AFP}), $\partial {S_h}$ is $\Hs^1$-rectifiable, and hence, $\partial S_{h,K}$ is $\Hs^1$-rectifiable. Furthermore, by applying \cite[Proposition A.1]{KP1} $S_h$ and $S_{h,K}$ are sets of finite perimeter.


\begin{definition}[Admissible multilayers and admissible configurations] 
\label{def:multilayers}
\normalfont
	We define the family of two layers $\B^1$ by
	\begin{multline*}
\B^1 : = \{(S_{h^1,K^1}, S_{h^0,K^0}) : \text{ for $i =0,1$ there exists } (h^i,K^i) \in \mathrm{AHK}(\Om), \,  S_{h^i,K^i} \in  \mathrm{AS}(\Om), \\
 h^0 \le h^1 \, \text{and } \partial S_{h^1,K^1} \cap \Int(S_{h^0,K^0}) = \emptyset  \} \subset  \mathrm{AS}(\Om) \times  \mathrm{AS}(\Om). 
\end{multline*}
Let $\alpha \in \N$, we define the family of admissible $(\alpha+1)$-layers $\B^\alpha$ by
\begin{multline*}
		\B^\alpha:= \{ (S_{h^\alpha,K^\alpha}, \ldots, S_{h^0,K^0})\in  [  \mathrm{AS}(\Om)]^{\alpha+1} : \,    (S_{h^i,K^i}, S_{h^{ i-1},K^{i-1}} ) \in \B^1 \\   \text{ for every $1 \le i 
  \le \alpha$}  \},
\end{multline*}
where $[ \mathrm{AS}(\Om)]^{\alpha+1} : =  \mathrm{AS}(\Om) \times \ldots \times  \mathrm{AS}(\Om)$ represents the $(\alpha+1)$ cartesian product of $\mathrm{AS}(\Om)$. 

We define the family of admissible configurations by
$$
\mathcal{C}^\alpha  := \{(S_{h^\alpha,K^\alpha}, \ldots, S_{h^0,K^0},  u): (S_{h^\alpha,K^\alpha}, \ldots, S_{h^0,K^0}) \in \B^\alpha ,\, u \in \mathrm{H}^1_{\mathrm{loc}} (\Int(S_{h^\alpha,K^\alpha}))\}.
$$
\end{definition}

\begin{remark} 
In view of Definition \ref{def:multilayers}, it follows that every $(S_{h^\alpha,K^\alpha}, \ldots, S_{h^0,K^0}) \in \B^\alpha$ satisfy the following properties:
\begin{itemize}
\item[(i)] $h^i \le h^j$ and $\partial S_{h^j,K^j} \cap \Int(S_{h^i,K^i}) = \emptyset  $ for every $0 \le i< j \le \alpha $ and hence,  $(S_{h^j,K^j},S_{h^{i},K^{i}} ) \in \B^1$ for every $0 \le i< j \le \alpha$;
    \item[(ii)]  $K^i \subset \overline{\Int (S_{h^i})} \setminus \Int(S_{h^{i-1},K^{i-1}})$
for every  $i= 1, \ldots, \alpha$;
\item[(iii)] $K^i \cap K^{i-2} \cap \Int(S_{h^{i-2}}) \subset K^i \cap \Int(S_{h^{i-1}}) \subset \partial S_{h^i, K^i} \cap \Int(S_{h^{i-1}}) \subset K^{i-1}
$ for every  $i= 2, \ldots, \alpha$.
\end{itemize}

\end{remark}

For any $\alpha \in \N$, motivated in \cite{KP, KP1, LlP} we introduce a notion of convergence for the families $\B^\alpha$ and $\C^\alpha$. 
\begin{definition}[$\tau_{\B^\alpha}$-Convergence] \normalfont
\label{mldef:taubconvergencelambda}
	A sequence $ \{ (S_{h^\alpha_n,K^\alpha_n}, \ldots, S_{h^0_n,K^0_n}) \} \subset \B^\alpha$  ${\tau_\B^\alpha}$-converges to $(S_{h^\alpha,K^\alpha}, \ldots, S_{h^0,K^0})
 \in \B^\alpha$ if 
	\begin{itemize}
		\item[-]  $\sup_{k\in\mathbb{N}}
		\Hs^1 (\partial S_{h^i_k, K^i_k})<\infty$, for every $i = 0, \ldots, \lambda$ 
		\item[-] $\text{sdist} \left( \cdot, \partial S_{h^i_k, K^i_k} \right) \rightarrow \text{sdist} \left( \cdot, \partial S_{h^i,K^i} \right)$ locally uniformly in $\R^2$ as $k \rightarrow \infty$ for every $i = 0, \ldots, \lambda$,
	\end{itemize}
	where 
	$$S_{h^i_k, K^i_k}  :=S_{h^i_k} \setminus K^i_k \quad \text{and} \quad S_{h^i, K^i}  := S_{h^i} \setminus K^i
	,$$
	for every $k \in \N$ and $i = 0, \ldots, \alpha$.
\end{definition}

		\begin{definition}[$\tau_{\mathcal{C}^{\alpha}}$-Convergence] \normalfont
A sequence $ \{ (S_{h^\alpha_n,K^\alpha_n}, \ldots, S_{h^0_n,K^0_n}, u_n) \} \subset \mathcal{C}^\alpha$  ${\tau_{\mathcal{C}^\alpha}}$-converges to $(S_{h^\alpha,K^\alpha}, \ldots, S_{h^0,K^0}, u) \in \mathcal{C}^\alpha$ if 
	\begin{itemize}
		\item [-] $(S_{h^\alpha_n,K^\alpha_n}, \ldots, S_{h^0_n,K^0_n}) \xrightarrow{\tau_{\mathcal{B}^\alpha}} (S_{h^\alpha,K^\alpha}, \ldots, S_{h^0,K^0})$,
		\item[-] $u_n \rightarrow u$ a.e. in ${\Int(S_{h^\alpha,K^\alpha})}$.
	\end{itemize}
\end{definition}

Analogously to \cite{KP,KP1,LlP}, we introduce a subfamily of $\B^\alpha$ subject to a restriction on the number of connected components of the boundary of each composite layer and the subfamily of $\C^\alpha$ with the corresponding configurations.

\begin{definition}\normalfont
\label{def:mconnected}
Let $\alpha \in \N$ and let $\bm{m}:= (m_0, \ldots, m_\alpha) \in \N^{\alpha+1}$.   We refer to 
\begin{multline*}
    \B^\alpha_{\bm{m}} : = \{ (S_{h^\alpha,K^\alpha}, \ldots, S_{h^0,K^0}) \in \B^\alpha : \, \text{$\partial S_{h^i,K^i}$ has at most $m_i$-connected components} \\ \text{for $i = 0 , \ldots, \alpha$}  \}
\end{multline*}
as the families of \emph{admissible multilayers} and to 
$$
\C^\alpha_{\bm{m}}:= \{ (S_{h^\alpha,K^\alpha}, \ldots, S_{h^0,K^0}, u) \in \C: \, (S_{h^\alpha,K^\alpha}, \ldots, S_{h^0,K^0}) \in \B^\alpha_{\bm{m}} \}
$$
as the family of \emph{admissible  configurations}. 
\end{definition}

In the sequel we fix $\alpha \in \N$. Motivated in the model introduced in \cite{LlP}, we consider the surface tension between two layers $\mathcal{S}^{(i,j)}: \mathcal{B}^\alpha \to [0, +\infty]$ in the family of admissible layers $\B^\alpha$ by
$$
\mathcal{S}^{(i,j)}(S_{h^j,K^j}, S_{h^i,K^i})  := \int_{\partial S_{h^i,K^i} \cup \partial S_{h^j,K^j}  } {\psi_{i,j} (z, \nu) \, d\Hs^{1}},
$$
where $(S_{h^\alpha,K^\alpha},\ldots,  S_{h^0,K^0}) \in \B^\alpha$ for $0 \le i< j \le  \alpha$, and the surface tension $\psi_{i,j}$ is defined in different portions of $\partial S_{h^i,K^i} \cup \partial S_{h^j,K^j}$, more precisely, 
 \begin{equation}
		\label{mlpsiij}
		\psi_{i,j}(x, \nu(x)) :=  \begin{cases} 
  \varphi_{j}(x, \nu_{S_{h^j,K^j}} (x))  & x \in \Om \cap (\partial^* S_{h^j,K^j} \setminus \partial^* S_{h^i,K^i})
  \\
		\varphi^1_{ij} (x, \nu_{S_{h^j,K^j}}(x)) & x \in \Om \cap   \partial ^*S_{h^i,K^i} \cap \partial^* S_{h^j,K^j}, \\
  \varphi_{ij} (x, \nu_{S_{h^i,K^i}} \xp ) & x \in\Om \cap (\partial ^*S_{h^i,K^i} \setminus \partial S_{h^j,K^j}) \\
  (\varphi_j + \varphi^1_{ij}  )(x, \nu_{S_{h^j,K^j}} (x) ) & x \in\Om \cap  \partial ^*S_{h^i,K^i} \cap \partial S_{h^j,K^j}  \cap S_{h^j,K^j}^{(1)},\\
   2\varphi_j(x, \nu_{S_{h^j,K^j}} \xp ) 
   & x \in \Om \cap \partial S_{h^j,K^j} 
   \cap  S_{h^j,K^j}^{(1)} \cap  S_{h^i,K^i}^{(0)},  \\
   2 \varphi^2_{ij}(x, \nu_{S_{h^j,K^j}} \xp )
   & x \in  \Om \cap \partial {S_{h^j,K^j}} 
   \cap {S_{h^j,K^j}}^{(0)},  \\
   2\varphi_{ij} (x, \nu_{S_{h^i,K^i}} \xp)
    & \begin{aligned}
        x \in  \Om &\cap (\partial S_{h^i,K^i} \setminus \partial S_{h^j,K^j}) \\
        &\cap \left(S_{h^i,K^i}^{(1)} \cup S_{h^i,K^i}^{(0)}\right)  \cap S_{h^j,K^j}^{(1)},
    \end{aligned}   \\
     \varphi^1_{ij} (x, \nu_{S_{h^j,K^j}}(x)) 
     & x \in  \Om \cap \partial S_{h^i,K^i} \cap \partial S_{h^j,K^j} \cap S_{h^i,K^i}^{(1)},  
 \end{cases} 
	\end{equation}
where $\varphi_j,\, \varphi_{ij}: \overline{\Om} \times \R^2\to [0,\infty]$ and, given also the function $\varphi_{i}: \overline{\Om} \times \R^2\to [0,\infty]$, we define the functions $\varphi^1_{ij}$ and $\varphi^2_{ij}$ in 
 $C ( \overline{\Om} \times \R^2;[0,\infty])$ by 
 $$\varphi^1_{ij} := \min\{\varphi_{i}, \varphi_{j} + \varphi_{ij} \}\qquad\text{and}\qquad \varphi^2_{ij} : = \min\{\varphi_{j}, \varphi_{i}\}.$$
 In view of \cite{LlP}, for every $0 \le i \le j \le \alpha$, 
$\varphi_j, \varphi_i, \varphi_{ij}$ represent  the anisotropic surface tensions of the film/vapor, the substrate/vapor and the substrate/film interfaces, respectively, while $\varphi^1_{ij}$ and $\varphi^2_{ij}$ are referred to as the anisotropic \emph{regime surface tensions} and are introduced to include into the analysis the wetting and dewetting regimes.

\begin{remark}\label{alpha1}
If $\alpha = 1$, we can observe that the surface tension $\mathcal{S}$ considered in \cite{LlP} coincides with $\mathcal{S}^{(0,1)}$ by considering, with respect to the notation of \cite{LlP}, $\varphi_0 := \varphi_{\mathrm{S}}, \varphi_1  := \varphi_{\mathrm{F}}, \varphi_{01}  := \varphi_{\mathrm{FS}}$ and as a consequence $ \varphi^1_{01} = \varphi $ and $\varphi^2_{01} = \varphi'$.
\end{remark}

Now, we are in the position to define  the  \emph{$\alpha$-layered  total surface energy} $\mathcal{S}^\alpha_{t} : \B^\alpha \to [0, \infty]$ and the \emph{$\alpha$-layered sequential surface energy} $\mathcal{S}^\alpha_{s} : \B^\alpha \to [0, \infty]$ by
\begin{equation}\label{tot_surface_energy}
\mathcal{S}^{\alpha}_t(S_{h^\alpha,K^\alpha}, \ldots, S_{h^0,K^0}) :=\sum_{j=1}^{\alpha}\sum_{i=0}^{j-1} \mathcal{S}^{(i,j)}(S_{h^j,K^j}, S_{h^i,K^i}) 
\end{equation}
and
\begin{equation}\label{seq_surface_energy}
\mathcal{S}^{\alpha}_s(S_{h^\alpha,K^\alpha}, \ldots, S_{h^0,K^0}) :=\sum_{j=1}^{\alpha}\mathcal{S}^{(j-1,j)}(S_{h^j,K^j}, S_{h^{j-1},K^{j-1}}), 
\end{equation}
respectively. For simplicity we refer to the energy $\mathcal{S}^\alpha_\sigma : \B^\alpha \to [0, \infty]$  for $\sigma \in \{s,t\}$ as the \emph{$\alpha$-layered surface energy}. 

\begin{remark} \label{moregeneral_settings}
The terms in the sum of both definitions \eqref{tot_surface_energy}  and \eqref{seq_surface_energy} depend on the surface tensions $\psi_{i,j}$ that are defined in terms of the family $\Phi$ of surface tensions given by
$$\Phi:=\{\varphi_j,\varphi_{i,j}:\overline{\Om} \times \R^2\to [0,\infty] \, :\, j=1,\dots,\alpha,i=0,\dots,j-1\}.$$
Furthermore, we observe  that the definition of the  surface tensions $\psi_{i,j}$ and $\psi_{j,k}$  for $k\in\{1,\dots,\alpha\}$, $j\in\{1,\dots,k-1\}$, and $i\in\{0,\dots,j-1\}$ are interconnected, as they both depend on $\varphi_{j}$.
We would like to point out that this is just a choice  done for simplicity (and in relation to the physical meaning of the surface tensions of each pair of  $i$th and $j$th composites for $j=1,\dots,\alpha$ and  $i=0,\dots,j-1$, i.e., $\varphi_j$, $\varphi_i$,  and $\varphi_{i,j}$, which follows from \cite{LlP} and Remark \ref{alpha1}). 
We can avoid this interconnection, for example by working in the following settings:
\begin{itemize}
\item[(i)] given $\varphi_{j}:\overline{\Om} \times \R^2\to [0,\infty]$ in the definition of $\psi_{j-1,j}$ by replacing  $\varphi_{j}$ in the definition:
\begin{itemize}
\item[-] of each  $\psi_{j,k}$ (and of the corresponding $\varphi^1_{j,k}$ and $\varphi^2_{j,k}$) for $k\in\{j+1,\dots,\alpha\}$ with a surface tension $\varphi'_{j,k}$ possibly different from $\varphi_{j}$,
\item[-] of each  $\psi_{i,j}$ (and of the corresponding $\varphi^1_{i,j}$ and $\varphi^2_{i,j}$) for $i\in\{0,\dots,j-2\}$ with a surface tension $\varphi'_{i,j}$ possibly different from $\varphi_{j}$,
\end{itemize}
namely by enlarging the definition of $\Phi$  by considering instead
\begin{align}\label{phi'family}
\Phi':=\Phi\cup\{\varphi'_{j,k},\varphi'_{i,j}: & \,\,\overline{\Om} \times \R^2\to [0,\infty] \, :\nonumber\\
& j=1,\dots,\alpha, k=j+1,\dots,\alpha, i=0,\dots,j-2\}; 
\end{align}
\item[(ii)] by defining some  surface tensions in $\phi\in\Phi'$ as a combination of the other surface tensions, namely the surface tensions in $\Phi'\setminus\{\phi\}$. 
\end{itemize}
\end{remark}

Let $\alpha\in \N$ and let $\sigma \in \{s,t\}$, the \emph{$\alpha$-layered total energy} $\mathcal{F}^\alpha_{\sigma} : \mathcal{C}^\alpha \to [- \infty, \infty]$ is defined by 
$$
	\mathcal{F}^{\alpha}_{ \sigma}(S_{h^\alpha,K^\alpha}, \ldots, S_{h^0,K^0},u):= \mathcal{S}^{\alpha}_{ \sigma}(S_{h^\alpha,K^\alpha}, \ldots, S_{h^0,K^0})  + \mathcal{W}({S_{h^\alpha,K^\alpha}, \ldots, S_{h^0,K^0},}
 u)
$$
for any $(S_{h^\alpha,K^\alpha}, \ldots, S_{h^0,K^0},u) \in \C^\alpha $, where $\mathcal{W}$ stands for the elastic energy, more precisely,  
$$
	\mathcal{W}({S_{h^\alpha,K^\alpha}, \ldots, S_{h^0,K^0},}
 u) : = \int_{S_{h^\alpha, K^\alpha}} W(x, Eu(x) - {E_0^\alpha} (x)) \, dx,
$$
and $W$ is determined by the quadratic form 
\begin{equation*}
		W \left( x, M \right) := \mathbb{C}\xp M : M,
	\end{equation*}
	for a fourth-order tensor $\mathbb{C}: \Om \to \mathbb{M}^2_\sym$, $E$ denotes the symmetric gradient, i.e., $E(v) := \frac{\nabla v + \nabla^T v}{2}$ for any $v \in \mathrm{H}^1_\mathrm{{loc}} (\Om)$ and ${ E_0^\alpha}$ is 
	the mismatch strain $x \in \Om \mapsto {E_0^\alpha} \xp \in \mathbb{M}^{2}_\mathrm{{sym}} $ defined as
	\begin{equation*}
		{E_0^\alpha} := \begin{cases}
			E(u_0^\alpha) & \text{in } \Om \setminus S_{h^{\alpha-1}}, \\
            E(u_0^i) & \text{in } \Int(S_{h^i}) \setminus S_{h^{1-1}} \text{ for $i = 1 , \ldots, \alpha-1$}  \\
            0 & \text{in } \Int (S_{h^0,K^0}),
		\end{cases} 
	\end{equation*}
 
	for a fixed sequence  $\{ u_0^i \}_{i=1}^{\alpha-1} \subset H^1(\Om; \R^2)$.


\subsection{Main results} We state here the main results of the paper. 
Let $\alpha \in \N$. We fix $l, L> 0$ and we consider $\Om : =(-l,l) \times(-L,{\infty})$. For every pair of integers $0\le i\le  j \le \alpha$ we consider  $\varphi^1_{ij}: = \min\{\varphi_i, \varphi_{j} + \varphi_{ij} \}$ and $\varphi^2_{ij} : =  \min\{ \varphi_j, \varphi_i \}$ and we assume throughout this manuscript that:
		\begin{itemize}
			\item[(H1)] $\varphi_{j}, \varphi_{ij}, \varphi^1_{ij}, \varphi^2_{ij} \in C ( \overline{\Om} \times \R^2) $ are Finsler norms such that there exists $c_2 \ge c_1 > 0$ such that
			\begin{equation}
	\label{mleq:H1}
c_1 \abs{\xi} \le  \varphi_{j} (x, \xi), \varphi^1_{ij}(x, \xi), \varphi_{ij} (x, \xi) \le c_2 \abs{\xi} \quad \textrm{for every $x \in \overline{\Om}$ and $\xi \in \R^2$},
			\end{equation}
			\item[(H2)] We have 
			\begin{equation}
				\label{mleq:H2}
				\varphi^1_{ij}(x, \xi) \ge \abs{ \varphi_{ij}(x, \xi) -\varphi_{j} (x, \xi)} \quad \textrm{for every $x \in \overline{\Om}$ and $\xi \in \R^2$.}
			\end{equation}
			\item[(H3)] $\mathbb{C} \in L^\infty (\Om; \mathbb{M}^2_\sym)$  and there exists $c_3 > 0$ such that
			\begin{equation}
			\label{mleq:H3}				\mathbb{C} \xp M:M \ge 2c_3 M:M
			\end{equation}
			for every $M \in \mathbb{M}_{\sym}^{2 }.$
		\end{itemize}
We notice that under assumptions (H1)-(H3), the energy $\mathcal{F}^\alpha_\sigma(S_{h^\alpha,K^\alpha}, \ldots, S_{h^0,K^0},u)\in [0, \infty]$ for every $(S_{h^\alpha,K^\alpha}, \ldots, S_{h^0,K^0},u) \in {\mathcal{C}^\alpha}$.

We now state the main theorem of this paper.

\begin{theorem}[Existence of minimizers]
			\label{mlthm:existence}
Fix $\alpha \in \N$, $\mathbf{m}= (m_0, \ldots,m_\alpha) \in \N^{\alpha+1}$, and $\sigma \in \{s,t\}$. Let $\{\mathbbm{v}_i\}_{i=0}^\alpha \subset [\mathcal{L}^2({\Om})/2, \mathcal{L}^2({\Om})]$ be such that $\mathbbm{v}_{i_1} \le \mathbbm{v}_{i_2}$ for every $0\le i_1< i_2 \le \alpha$, and let $\bm{\lambda} : = (\lambda_0, \ldots, \lambda_\alpha) \in \R^{\alpha+1}$ be such that $\lambda_i> 0$ for every $i=0, \ldots, \alpha$. If (H1)-(H3) holds true,  then both the volume constrained minimum problem 
			\begin{equation}
				\label{mleq:const}
	\inf_{\small \begin{aligned}
	&(S_{h^\alpha,K^\alpha}, \ldots, S_{h^0,K^0},u) \in \mathcal{C}^\alpha_{\mathbf{m}}, \\ 
	&\mathcal{L}^2({S_{h^i,K^i}})= \mathbbm{v}_i,\, i = 0, \ldots, \alpha 
	\end{aligned} }{\mathcal{F}^\alpha_{ \sigma} (S_{h^\alpha,K^\alpha}, \ldots, S_{h^0,K^0},u)}
			\end{equation}
			and  the unconstrained minimum problem
			\begin{equation}
				\label{mleq:uncost}
				\inf_{(S_{h^\alpha,K^\alpha}, \ldots, S_{h^0,K^0},u) \in \mathcal{C}^\alpha_{\mathbf{m}}}{\mathcal{F}^{\alpha,\bm{\lambda}}_{\sigma}(S_{h^\alpha,K^\alpha}, \ldots, S_{h^0,K^0},u) \in \mathcal{C}^\alpha_\mathbf{m}},
			\end{equation}
   where  $\mathcal{F}^{\alpha,\bm{\lambda}}_{ \sigma} : \mathcal{C}^\alpha_{\mathbf{m}}\rightarrow \R$ is defined as
			\begin{equation*}
	\mathcal{F}^{\alpha,\bm{\lambda}}_{ \sigma}(S_{h^\alpha,K^\alpha}, \ldots, S_{h^0,K^0},u):= \mathcal{F}^\alpha_{\sigma}(S_{h^\alpha,K^\alpha}, \ldots, S_{h^0,K^0},u)+ \sum_{i= 0}^\alpha \lambda_i \abs{ \mathcal{L}^2 (S_{h^i,K^i})- \mathbbm{v}_i },  
			\end{equation*}
			admit a solution.
		\end{theorem}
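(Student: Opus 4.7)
The plan is to run the direct method of the calculus of variations on both \eqref{mleq:const} and \eqref{mleq:uncost} in parallel, treating the single-pair results of \cite{LlP} as a black box and inducting on the number of layers $\alpha$.

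Fix a minimizing sequence $\{(S_{h^\alpha_k,K^\alpha_k}, \ldots, S_{h^0_k,K^0_k}, u_k)\} \subset \C^\alpha_{\mathbf{m}}$ for either functional; write $\Sk^i := S_{h^i_k,K^i_k}$ for brevity. The coercivity in (H1) applied to each of the contributions $\mathcal{S}^{(i,j)}$ making up $\mathcal{S}^\alpha_\sigma$ produces the uniform bounds $\sup_k \Hs^1(\partial \Sk^i) < \infty$ for every $i = 0, \ldots, \alpha$. Combined with the constraint on the number of connected components of each $\partial \Sk^i$ encoded in $\C^\alpha_{\mathbf{m}}$, this lets me apply, separately for each $i$, the compactness machinery developed in \cite{LlP} for the single-layer setting (a Blaschke-type selection together with Go\l ab's theorem on the connected components of $\partial \Sk^i$ not touching the $i$th profile). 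A diagonal extraction delivers $\sdist(\cdot, \partial \Sk^i) \to \sdist(\cdot, \partial S_{h^i,K^i})$ locally uniformly in $\R^2$ for every $i$; the order relations $h^{i-1}_k \le h^i_k$ and the inclusions $K^i_k \subset \overline{\Int(S_{h^i_k})}$ pass to the limit, so that $(S_{h^\alpha,K^\alpha}, \ldots, S_{h^0,K^0}) \in \B^\alpha_{\mathbf{m}}$. The elastic bound through (H3), combined with Korn's inequality on open sets compactly contained in $\Int(S_{h^\alpha,K^\alpha})$, gives, after subtracting suitable rigid motions, an a.e.\ limit $u \in H^1_{\mathrm{loc}}(\Int(S_{h^\alpha,K^\alpha}))$, so that the full $\tau_{\C^\alpha}$-convergence holds along a subsequence.

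Next I would establish lower semicontinuity of $\mathcal{F}^\alpha_\sigma$ along $\tau_{\C^\alpha}$-convergence by induction on $\alpha$. The base $\alpha = 1$ is precisely the single-layer lower semicontinuity of \cite{LlP}, applicable here in view of Remark \ref{alpha1}. For the inductive step, I observe that any $\tau_{\B^\alpha}$-convergent sequence induces, for every pair $0 \le i < j \le \alpha$, a $\tau_{\B^1}$-convergent sequence of pairs $(\Sk^j, \Sk^i)$, as is immediate from Definition \ref{mldef:taubconvergencelambda}; hence each contribution $\mathcal{S}^{(i,j)}$ is lower semicontinuous by the base case, and lower semicontinuity of $\mathcal{S}^\alpha_\sigma$ for $\sigma \in \{s,t\}$ follows by summing the finitely many pair contributions from \eqref{seq_surface_energy} or \eqref{tot_surface_energy}. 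The elastic term $\mathcal{W}$ is handled by weak-$H^1$ lower semicontinuity of the quadratic form $W$ on any open set compactly contained in $\Int(S_{h^\alpha,K^\alpha})$, combined with the locally uniform convergence of the indicators $\mathbbm{1}_{\Int(\Sk^i)}$, which guarantees convergence of the mismatch strain $E_0^\alpha$ in the limit.

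The main obstacle I foresee is precisely the pair-by-pair reduction in the inductive step: the density $\psi_{i,j}$ in \eqref{mlpsiij} is defined by a partition of $\partial \Sk^i \cup \partial \Sk^j$ into coherent, incoherent, filament, and interior portions that depends jointly on both sets, with the measure-theoretic normals taken with respect to different sets in different regions. I expect these subtleties to be entirely absorbed by the single-layer statement of \cite{LlP}, since the exterior-graph constraint on each $h^i$ here is exactly the framework analyzed there, and the dependence on the multilayer structure enters only through the choice of pair $(i,j)$. Once lower semicontinuity is in hand, the proof of \eqref{mleq:uncost} concludes because the penalty $\sum_i \lambda_i |\mathcal{L}^2(\Sk^i) - \mathbbm{v}_i|$ is continuous under the $L^1(\Om)$ convergence of $\mathbbm{1}_{\Sk^i}$ that the locally uniform signed-distance convergence produces inside the bounded set $\Om$, while \eqref{mleq:const} follows because the same $L^1$ convergence preserves the equalities $\mathcal{L}^2(S_{h^i,K^i}) = \mathbbm{v}_i$, so the limit configuration is admissible and optimal.
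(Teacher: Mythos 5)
Your lower semicontinuity argument follows the paper's route (induction on $\alpha$, reducing to pair contributions $\mathcal{S}^{(i,j)}$ which are lower semicontinuous by the base case of \cite{LlP}, then superadditivity of the $\liminf$), and your treatment of $\mathcal{W}$ and of the passage of constraints to the limit via $L^1$-convergence matches the paper. The real difficulty you have not addressed is in the compactness step.

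The gap is that you treat compactness as: extract a $\tau_{\B^\alpha}$-convergent subsequence, then on compact subsets $D \subset\subset \Int(S_{h^\alpha,K^\alpha})$ apply Korn after subtracting suitable (piecewise) rigid motions from $u_k$. This does not close, because the connected-component structure of $\Int(S_{h^\alpha_k,K^\alpha_k})$ need not be compatible with that of $\Int(S_{h^\alpha,K^\alpha})$: two connected components $E_1, E_2$ of the limit domain can be ``carved out'' of a \emph{single} connected component of the approximating domain, and then the rigid motion you must subtract to control $u_k$ near $E_1$ generically conflicts with the one needed near $E_2$ (the relative rigid drift between the two regions is not controlled by the elastic energy through the thin neck joining them, which disappears in the limit). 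A diagonal argument over compact exhaustions cannot fix this. What the paper actually does (Theorem~\ref{mlcompactness:Clambda}, via \cite[Propositions 3.4, 3.6, Corollary 3.8]{KP}) is to \emph{replace} the cracks $K^i_{k_n}$ by enlarged cracks $\widetilde{K}^i_n$ that add finitely many extra internal cuts (of $\mathcal{H}^1$-measure tending to $0$) so that $\Int(S_{h^\alpha_{k_n},\widetilde{K}^\alpha_n})$ has a connected-component structure compatible with the limit, and only then apply Korn--Poincar\'e componentwise, producing the piecewise rigid $b_n$ and the limit $u$. Crucially, the $\tau_{\C^\alpha}$-convergent sequence is then this \emph{modified} sequence $(S_{h^\alpha_{k_n},\widetilde{K}^\alpha_n},\ldots, u_{k_n}+b_n)$, not the original minimizing one, and one must also verify that the modification does not change either the asymptotic energy (because the added boundary has vanishing $\mathcal{H}^1$-measure and $E(u_{k_n}+b_n)=E u_{k_n}$) or the areas $\mathcal{L}^2(S_{h^i_{k_n},\widetilde{K}^i_n})=\mathcal{L}^2(S_{h^i_{k_n},K^i_{k_n}})$; the statement $\liminf \mathcal{F}^\alpha_\sigma(\text{modified}) = \liminf \mathcal{F}^\alpha_\sigma(\text{original})$ is what makes the direct method legitimate. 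Your write-up omits this entire mechanism, so the existence argument as stated does not go through. A smaller omission: for the unconstrained problem \eqref{mleq:uncost} you invoke the compactness theorem, but that theorem requires $\sup_k \mathcal{L}^2(S_{h^\alpha_k,K^\alpha_k}) < \infty$ as part of its hypothesis; one has to first show that the penalty $\lambda_\alpha|\mathcal{L}^2(S_{h^\alpha_k,K^\alpha_k}) - \mathbbm{v}_\alpha|$ provides this bound along a minimizing sequence, a point you mention only in the context of continuity of the penalty, not of coercivity.
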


  We employ the Direct Method of Calculus of Variations to prove Theorem \ref{mlthm:existence}. In order to apply this method we prove that any energy equi-bounded sequence $\{(S_{h^\alpha,K^\alpha}, \ldots, S_{h^0,K^0},u)\} \subset \mathcal{C}^\alpha_{\mathbf{m}}$ satisfy the following compactness property.

  \begin{theorem}
\label{mlcompactness:Clambda}
    Fix $\alpha \in \N$, $\mathbf{m}= (m_0, \ldots,m_\alpha) \in \N^{\alpha+1}$, and $\sigma \in \{s,t\}$.  Let $\{(S_{h^\alpha_k,K^\alpha_k}, \ldots ,S_{h^0_k,K^0_k}, u_k)\}_{k \in \N} \subset \mathcal{C}^\alpha_{\mathbf{m}}$ be such that
		\begin{equation}
	\label{mleqthm:compactnessClambda}
			\sup_{k\in\mathbb{N}}{\left( \mathcal{F}^\alpha_{\sigma} (S_{h^\alpha_k,K^\alpha_k}, \ldots ,S_{h^0_k,K^0_k}, u_k)  + {\mathcal{L}^2 \left( S_{h^\alpha_k,K^\alpha_k} \right)} \right)} < \infty.
		\end{equation}
	Then, there exist an admissible configuration $(S_{h^\alpha,K^\alpha}, \ldots ,S_{h^0,K^0},u) \in \mathcal{C}^\alpha_{\mathbf{m}}$ of finite energy, a subsequence $\{(S_{h^\alpha_\kn,K^\alpha_\kn}, \ldots ,S_{h^0_\kn,K^0_\kn}, u_{k_n})\}_{n \in \N}$, a sequence $\{(S_{h^\alpha_\kn,\wt K^\alpha_n}, \ldots, S_{h^0_\kn,\wt K^0_n}, u_{k_n})\}_{n \in \N} \subset \mathcal{C}^\alpha_{\mathbf{m}}$ and a sequence $\{b_n\}_{n\in \N}$ of piecewise rigid displacements associated to $S_{h^\alpha_{k_n}, \widetilde{K}^\alpha_n}$ such that 
 $$
 (S_{h^\alpha_\kn,\wt K^\alpha_n}, \ldots, S_{h^0_\kn,\wt K^0_n}, u_{k_n} + b_n) \xrightarrow{\tau_{\mathcal{C}^\alpha}} (S_{h^\alpha,K^\alpha}, \ldots ,S_{h^0,K^0},u) $$
and 
	\begin{equation}
	\label{mlliminfequalliminflayers}
		\liminf_{n \rightarrow \infty}{\mathcal{F}^\alpha_{\sigma}(S_{h^\alpha_\kn,K^\alpha_\kn}, \ldots ,S_{h^0_\kn,K^0_\kn}, u_{k_n})} = \liminf_{n \rightarrow \infty}{\mathcal{F}^\alpha_{\sigma}(S_{h^\alpha_\kn,\wt K^\alpha_n}, \ldots, S_{h^0_\kn,\wt K^0_n}, u_{k_n} + b_n)   }.
		\end{equation}
 \end{theorem}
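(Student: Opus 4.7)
The plan is to combine a diagonal extraction across the $\alpha+1$ layers with the single-layer compactness of \cite{LlP} (which by Remark \ref{alpha1} covers the base case $\alpha=1$). By the coercivity of the surface tensions in (H1), the energy bound \eqref{mleqthm:compactnessClambda} yields $\sup_k \mathcal{H}^1(\partial S_{h^i_k, K^i_k}) \le C$ for each $i = 0, \dots, \alpha$, while the uniform area bound $\mathcal{L}^2(S_{h^i_k, K^i_k}) \le \mathcal{L}^2(S_{h^\alpha_k, K^\alpha_k}) \le C$ follows from the layer monotonicity built into the admissibility class $\mathcal{B}^\alpha$.

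For each $i$ independently, I would apply a Blaschke-type selection to the signed distance functions $\sdist(\cdot, \partial S_{h^i_k, K^i_k})$ and invoke Go{\l}\k ab's theorem to produce a limit $(h^i, K^i) \in \mathrm{AHK}(\Omega)$ together with a modified crack sequence $\widetilde K^i_n$ (absorbing the extra connected components created by the Go{\l}\k ab reconnection) such that $\sdist(\cdot, \partial S_{h^i_{k_n}, \widetilde K^i_n}) \to \sdist(\cdot, \partial S_{h^i, K^i})$ locally uniformly. A diagonal extraction produces a common subsequence that works simultaneously for all $i$. The constraint that $\partial S_{h^i, K^i}$ have at most $m_i$ connected components follows from the lower semicontinuity of the number of components under Hausdorff convergence. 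The layer ordering $h^{i-1} \le h^i$ and the interior disjointness $\partial S_{h^i, K^i} \cap \mathrm{Int}(S_{h^{i-1}, K^{i-1}}) = \emptyset$ from Definition \ref{def:multilayers} pass to the limit because any interior point of $S_{h^{i-1}, K^{i-1}}$ is eventually an interior point of $S_{h^{i-1}_{k_n}, \widetilde K^{i-1}_n}$ and hence has uniformly positive distance from $\partial S_{h^i_{k_n}, \widetilde K^i_n}$, a property preserved by uniform signed-distance convergence.

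The bulk displacement is handled by the piecewise rigid displacement argument used in \cite{LlP}: hypothesis (H3) and the elastic energy bound yield
\begin{equation*}
\int_{S_{h^\alpha_k, K^\alpha_k}} |Eu_k - E_0^\alpha|^2 \, dx \le C,
\end{equation*}
and since each $u_0^i \in H^1(\Omega;\mathbb{R}^2)$, this upgrades to a uniform $L^2$-bound on $Eu_k$ over $\mathrm{Int}(S_{h^\alpha_k, K^\alpha_k})$. A Korn-type selection on each connected component of this cracked domain, applied to the modified sequence $S_{h^\alpha_{k_n}, \widetilde K^\alpha_n}$, produces infinitesimal rigid corrections $b_n$ such that $u_{k_n} + b_n \to u$ a.e.\ in $\mathrm{Int}(S_{h^\alpha, K^\alpha})$ for some $u \in H^1_{\mathrm{loc}}(\mathrm{Int}(S_{h^\alpha, K^\alpha}))$, as required for $\tau_{\mathcal{C}^\alpha}$-convergence.

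The main obstacle is ensuring that the nested crack condition $K^i \cap \mathrm{Int}(S_{h^{i-1}}) \subset K^{i-1}$ of Definition \ref{def:multilayers} is compatible with the Go{\l}\k ab-type modifications performed independently on each layer, because filaments added to $\widetilde K^i_n$ inside a lower subgraph may a priori have no counterpart in $\widetilde K^{i-1}_n$. I would resolve this by performing the crack modifications in decreasing order $i = \alpha, \alpha-1, \dots, 0$ and, whenever a reconnecting filament is added to $\widetilde K^i_n$ inside $\mathrm{Int}(S_{h^{i-1}_{k_n}})$, propagating it downwards by adjoining it to every $\widetilde K^j_n$ with $j < i$ for which the filament lies in $\mathrm{Int}(S_{h^j_{k_n}})$. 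Since each such propagation only reattaches components already present in the original boundaries of the lower layers, the $\mathcal{H}^1$-measure of each $\partial S_{h^i_{k_n}, \widetilde K^i_n}$ and the elastic integrand over $\mathrm{Int}(S_{h^\alpha_{k_n}, \widetilde K^\alpha_n})$ coincide with those of the original sequence up to $\mathcal{H}^1$- and $\mathcal{L}^2$-null sets respectively, which yields \eqref{mlliminfequalliminflayers}.
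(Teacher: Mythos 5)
Your proposal reproduces the broad skeleton of the paper's argument—compactness of the geometry, a crack modification to tame the bulk displacements, and downward propagation of that modification to preserve admissibility—but the role you assign to the crack modification $\widetilde K^i_n$ is not the one it plays in the paper, and this shifts where the real work lies.

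In the paper, $\tau_{\mathcal{B}^\alpha}$-compactness of the \emph{unmodified} sequence $(S_{h^\alpha_{k}, K^\alpha_{k}},\dots,S_{h^0_k,K^0_k})$ is established first, via Proposition \ref{mlproo:compactness} (which reduces by induction to Proposition \ref{mlcor:compactness} and ultimately to \cite[Theorem 4.2]{LlP}). So there is no need to produce a modified crack sequence in order to obtain signed-distance convergence at each layer, as your proposal suggests. The modification $\widetilde K^\alpha_n$ enters only afterwards, to realize property (a3) from \cite[Proposition 3.6]{KP}: one needs the connected components of $\mathrm{Int}(S_{h^\alpha_{k_n},\widetilde K^\alpha_n})$ to be matched to those of $\mathrm{Int}(S_{h^\alpha,K^\alpha})$ so that the Korn-type selection \cite[Corollary 3.8]{KP} can be applied component by component and yield the piecewise rigid displacements $b_n$. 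Your proposal performs a Go{\l}\k{a}b/Blaschke reconnection at \emph{every} level and then worries about compatibility, while the paper modifies only the top layer and defines $\widetilde K^i_n := K^i_{k_n} \cup \bigl((\partial \widetilde A_n \setminus \partial S_{h^\alpha_{k_n},K^\alpha_{k_n}}) \cap S_{h^i_{k_n}}\bigr)$ by intersection for $i<\alpha$. Your ``decreasing order plus downward propagation'' heuristic converges to the same construction in spirit, but the additional independent reconnections at lower levels are unnecessary and risk producing filaments with no counterpart at level $\alpha$, which would require yet another pass to repair nesting. You would also need to verify, as the paper does explicitly, that each added filament $\Gamma_j$ attaches to an existing component of $\partial S_{h^i_{k_n},K^i_{k_n}}$, so that the component count $m_i$ is not exceeded; the appeal to ``lower semicontinuity of the number of components under Hausdorff convergence'' only controls the limit, not the modified sequence itself, which must lie in $\mathcal{B}^\alpha_{\mathbf m}$.

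Finally, the claim that the surface energies of the modified and original sequences ``coincide up to $\mathcal{H}^1$-null sets'' is not correct: the added filaments have strictly positive $\mathcal{H}^1$-measure, and the identity \eqref{mlliminfequalliminflayers} follows instead from (a1), which asserts that $\mathcal{H}^1(\partial \widetilde A_n \setminus \partial S_{h^\alpha_{k_n},K^\alpha_{k_n}}) \to 0$, combined with the growth bound \eqref{mleq:H1} on the surface tensions. The elastic energies do coincide exactly, since the modification adds only an $\mathcal{L}^2$-null set of internal boundary and does not change the integration domain up to measure zero.
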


 Furthermore, we show that $\mathcal{F}^\alpha_{ \sigma}$ is lower semicontinuous in $\mathcal{C}^\alpha_{\mathbf{m}}$ with respect to the topology $\tau_{\C^\alpha}$ for any $\alpha \in \N$ and $\sigma \in \{s,t\}$.

 \begin{theorem}[Lower semicontinuity of $\mathcal{F}^\alpha_{\sigma}$]
	\label{mlthm:lowersemicontinuitymultilayer}
		Fix $\alpha \in \N$, $\mathbf{m}= (m_0, \ldots,m_\alpha) \in \N^{\alpha+1}$, and $\sigma \in \{s,t\}$. Assume (H1)-(H3). If $\{(S_{h^\alpha_k,K^\alpha_k}, \ldots, S_{h^0_k,K^0_k} , u_k)\}_{k \in \N} \subset \mathcal{C}^\alpha_{\mathbf{m}}$ and $(S_{h^\alpha,K^\alpha}, \ldots ,S_{h^0,K^0}, u) \in \mathcal{C}^\alpha_{\mathbf{m}}$ are such that 
  $$
  (S_{h^\alpha_k,K^\alpha_k}, \ldots, S_{h^0_k,K^0_k} , u_k) \xrightarrow{\tau_{\mathcal{C}^\alpha}} (S_{h^\alpha,K^\alpha}, \ldots ,S_{h^0,K^0}, u),
  $$ then
\begin{equation}\label{mleq:thmlower0}
	\mathcal{F}^\alpha_{ \sigma} (S_{h^\alpha,K^\alpha}, \ldots ,S_{h^0,K^0}, u)  \le \liminf_{k \rightarrow \infty}{ \mathcal{F}^\alpha_{ \sigma}(S_{h^\alpha_k,K^\alpha_k}, \ldots, S_{h^0_k,K^0_k} , u_k)}.
		\end{equation}
	\end{theorem}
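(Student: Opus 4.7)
I would decompose $\mathcal{F}^\alpha_\sigma = \mathcal{W} + \mathcal{S}^\alpha_\sigma$ and prove lower semicontinuity of each summand separately, concluding by subadditivity of $\liminf$. For the elastic term I would adapt the single-layer argument of \cite{LlP,FFLM2}. Fix an open set $A\subset\subset\Int(S_{h^\alpha,K^\alpha})$; the locally uniform convergence of $\sdist(\cdot,\partial S_{h^\alpha_k,K^\alpha_k})$ forces $A\subset\Int(S_{h^\alpha_k,K^\alpha_k})$ for $k$ large. Coercivity (H3), together with the $L^2$-bound on $(E_0^\alpha)_k$ inherited from $u_0^i\in H^1(\Om;\R^2)$, yields $\sup_k\|Eu_k\|_{L^2(A)}<\infty$; combined with $u_k\to u$ a.e.\ and Korn's inequality (up to a standard rigid-motion reduction, which has already been absorbed by Theorem \ref{mlcompactness:Clambda}) this gives $Eu_k\rightharpoonup Eu$ weakly in $L^2(A;\M^2_\sym)$. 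The signed-distance convergence also forces $\mathbbm{1}_{S_{h^i_k}}\to\mathbbm{1}_{S_{h^i}}$ a.e.\ for each $i=0,\dots,\alpha$ (since $S_{h^i_k}$ and $S_{h^i_k,K^i_k}$ differ by an $\Hs^1$-rectifiable, hence $\mathcal{L}^2$-null, set, while $\mathcal{L}^2(\partial S_{h^i})=0$ by finite perimeter), so $(E_0^\alpha)_k\to E_0^\alpha$ in $L^2(A;\M^2_\sym)$. Standard lower semicontinuity of the convex quadratic integrand $W(x,\cdot)$ then produces the desired inequality on $A$, and exhausting $\Int(S_{h^\alpha,K^\alpha})$ by such $A$ together with monotone convergence concludes the estimate for $\mathcal{W}$.

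For the surface term I would proceed by induction on $\alpha\ge 1$. The base case $\alpha=1$ is immediate: by Remark \ref{alpha1}, $\mathcal{S}^1_s=\mathcal{S}^1_t=\mathcal{S}^{(0,1)}$ coincides with the two-layer surface energy of \cite{LlP}, whose lower semicontinuity is proved there. For the inductive step, observe that $\tau_{\B^\alpha}$-convergence of the full tuple restricts automatically to $\tau_{\B^1}$-convergence of each pair $(S_{h^j_k,K^j_k},S_{h^i_k,K^i_k})$ with $0\le i<j\le\alpha$ and to $\tau_{\B^{\alpha-1}}$-convergence of the sub-tuple $(S_{h^{\alpha-1}_k,K^{\alpha-1}_k},\dots,S_{h^0_k,K^0_k})$. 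For $\sigma=s$ split
$$\mathcal{S}^\alpha_s \;=\; \mathcal{S}^{(\alpha-1,\alpha)}(S_{h^\alpha,K^\alpha},S_{h^{\alpha-1},K^{\alpha-1}}) \;+\; \mathcal{S}^{\alpha-1}_s(S_{h^{\alpha-1},K^{\alpha-1}},\dots,S_{h^0,K^0}),$$
apply the base case to the first summand and the induction hypothesis to the second, then invoke subadditivity of $\liminf$. For $\sigma=t$ split instead
$$\mathcal{S}^\alpha_t \;=\; \sum_{i=0}^{\alpha-1}\mathcal{S}^{(i,\alpha)}(S_{h^\alpha,K^\alpha},S_{h^i,K^i}) \;+\; \mathcal{S}^{\alpha-1}_t(S_{h^{\alpha-1},K^{\alpha-1}},\dots,S_{h^0,K^0}),$$
applying the base case to each of the $\alpha$ terms in the sum and the induction hypothesis to the tail.

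The genuinely nontrivial ingredient is entirely encapsulated in the base case $\alpha=1$, namely the two-layer lower semicontinuity of \cite{LlP}, which encodes the delicate interaction between the coherent and incoherent portions of the interface through the surface tensions $\psi_{i,j}$ defined in \eqref{mlpsiij}; once this is in hand, the multilayer extension is a structural reduction based on subadditivity of $\liminf$ and on the compatibility of $\tau_{\B^\alpha}$ with $\tau_{\B^1}$ on pairs. The elastic part is routine modulo the exhaustion argument in $\Int(S_{h^\alpha,K^\alpha})$ and the passage from locally uniform signed-distance convergence to $L^2$-convergence of the piecewise-defined mismatch strain $E_0^\alpha$.
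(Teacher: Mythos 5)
Your proposal is correct and follows essentially the same route as the paper: split $\mathcal{F}^\alpha_\sigma=\mathcal{W}+\mathcal{S}^\alpha_\sigma$, prove the surface term lower semicontinuous by induction on $\alpha$ with the base case $\alpha=1$ supplied by Theorem~\ref{mlthm:lowersemicontinuitylayer} and each pairwise term $\mathcal{S}^{(i,\alpha)}$ handled via the restriction of $\tau_{\mathcal{B}^\alpha}$-convergence to $\tau_{\mathcal{B}^1}$-convergence of pairs, prove the elastic term lower semicontinuous by exhausting $\Int(S_{h^\alpha,K^\alpha})$ and using convexity of $W$, and combine via superadditivity of $\liminf$. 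In fact your explicit observation that the $k$-dependent mismatch strain $(E_0^\alpha)_k$ converges to $E_0^\alpha$ in $L^2(A)$ is slightly more careful than the published argument, which in the elastic step writes $E_0^1$ for both the $k$-th and limiting configurations when passing from $\int_D W(x,Eu_k-E_0^1)\,dx$ to $\mathcal{W}(S_{h^1_k,K^1_k},S_{h^0_k,K^0_k},u_k)$ without comment.
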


We conclude this section by addressing the more general setting introduced in  Remark \ref{moregeneral_settings} regarding the family of surface tensions $\Phi'$ defined in \eqref{phi'family}.

 \begin{remark}\label{moregeneral_settings_results} 
We observe that Theorems \ref{mlthm:existence}--\ref{mlthm:lowersemicontinuitymultilayer} 
continue to hold in the more general setting for the surface energies \eqref{tot_surface_energy}  and \eqref{seq_surface_energy} that is described in Remark \ref{moregeneral_settings}-(i) (and so,  also for the setting of  Remark \ref{moregeneral_settings}-(ii))  by simply adapting   hypotheses (H1) and (H2) to the more general definition of $\psi_{i,j}$ given in Remark \ref{moregeneral_settings}-(i) in terms of the family $\Phi'$ of surface tensions  defined in \eqref{phi'family}. More precisely it suffices to  replace $\varphi_j$ in (H1) and (H2) with $\varphi'_{i,j}\in\Phi'$ for all  $j\in\{2,\dots,\alpha\}$, and $i\in\{0,\dots,j-2\}$.  In this way, as the application of   \cite[Theorem 4.2]{LlP},	\cite[Theorem 4.3]{LlP}, and \cite[Theorem 5.13]{LlP}, respectively, in Proposition  \ref{mlcor:compactness}, and in Theorems  \ref{mlthm:compactnesslayers} and  \ref{mlthm:lowersemicontinuitylayer}, is preserved, the same proof of Theorems \ref{mlthm:existence}--\ref{mlthm:lowersemicontinuitymultilayer} yields the corresponding results in  the setting of  Remark \ref{moregeneral_settings}. 
 \end{remark}



\section{Single-layer films with delamination}
\label{singlelayer}
In order to establish Theorem \ref{mlthm:existence} we use an induction argument, and in this section, we prove the basis of the induction. More precisely, we prove Theorem \ref{mlthm:existence} by assuming that $\alpha =1$ and hence, in the following,  we consider $\bm{m} := (m_1, m_0) \in \N^2$. We begin by observing that the double-layered film setting of $\alpha =1$ is a particular case of the two-phase setting considered in  \cite{LlP}, with the only difference that the ``exterior graph condition'' is assumed not only on the substrate region but also on the film phase. The analogy comes also from the fact, that as proved below, for energy equi-bounded admissible configurations in $\mathcal{C}^1$, we can easily reduced to bounded rectangular containers $\wt \Om := (-l,l) \times (-L, \wt L)$
for a properly chosen constant $\wt L>0$. 

We recall that in \cite{LlP} the families  of admissible regions $\mathcal{B}(\wt \Om)$ and $\mathcal{B}_{\mathbf{m}}(\wt \Om)$ are defined as 
\begin{align*}
	{\mathcal{B}}(\wt \Om):=\Large\{ (A,S_{h,K}):   \quad& (h,K) \in \text{AHK}(\wt \Om), \text{ $A$ is $\mathcal{L}^2$-measurable set with $S_{h,K}\subset  \overline{A}    \subset \overline{\wt \Om}$}\\   &\text{such that $\partial A \cap \Int(S_{h,K}) = \emptyset$, $\partial A$ is $\Hs^1$-rectifiable, } \\ &\text{$\Hs^1 (\partial A)+\Hs^1 (\partial S) < \infty$}  \}
\end{align*}
and
\begin{multline*}
    \mathcal{B}_{\mathbf{m}}(\wt \Om): = \{(A,S_{h,K}) \in{\mathcal{B}}(\wt \Om): \, \partial A \text{ and } \partial S_{h,K} \text{ have at most} \\ \text{$m_1$ and $m_0$ connected components, respectively} \}.
\end{multline*}
Notice that $\mathcal{B}^1(\wt \Om)\subset \mathcal{B}(\wt \Om)$ and $\mathcal{B}^1_{\mathbf{m}}(\wt \Om)\subset \mathcal{B}_{\mathbf{m}}(\wt \Om)$.
Furthermore,  the families of admissible configurations in \cite{LlP} are $\mathcal{C}(\wt \Om)$ and $\mathcal{C}_{\mathbf{m}}(\wt \Om)$ defined by
\begin{align*}
	{\mathcal{C}}(\wt \Om):=\Large\{ (A,S_{h,K},u):   \, (A,S_{h,K}) \in {\mathcal{B}}(\wt \Om) \text{ and 
  $u \in  H^{1}_\mathrm{{loc}} ( \Int (A); \R^2)$}  \}
\end{align*}
and
\begin{equation*}
    \mathcal{C}_{\mathbf{m}}(\wt \Om): = \{(A,S_{h,K},u) \in{\mathcal{C}}(\wt \Om): \, (A,S_{h,K}) \in {\mathcal{B}}_{\mathbf{m}}(\wt \Om) \},
\end{equation*}
so that  $\mathcal{C}^1(\wt \Om)\subset \mathcal{C}(\wt \Om)$ and $\mathcal{C}^1_{\mathbf{m}}(\wt \Om)\subset \mathcal{C}_{\mathbf{m}}(\wt \Om)$. Therefore, since the elastic energy $\mathcal{W}$ and the surface energy  $\mathcal{S}$ considered in \cite{LlP} coincide with the energies $\mathcal{W}$ and $\mathcal{S}^1$ of this manuscript (by also observing that, following the notation of \cite{LlP}, $\varphi_0 = \varphi_{\mathrm{S}}, \varphi_1  = \varphi_{\mathrm{F}}, \varphi_{01}  = \varphi_{\mathrm{FS}}$, $\varphi^1_{01} = \varphi $ and $\varphi^2_{01} = \varphi'$), we have that
\begin{equation}
	\label{mleq:equivalenceenergiesS}
	\mathcal{S} \equiv \mathcal{S}^1 \quad \text{and} \quad \mathcal{F}\equiv \mathcal{F}^1,
\end{equation}
in $\mathcal{C}^1(\wt \Om)$ and $\mathcal{C}^1_{\mathbf{m}}(\wt \Om)$.
Finally, we also observe have that the topologies $\tau_{\B}$ and $\tau_{\C}$ defined in \cite{LlP} coincide with the topologies $\tau_{\B^1}$ and $\tau_{\C^1}$, respectively.

On the basis of these observations and by using the results for the two-phase setting of \cite{LlP}, 
we now prove that energy-equibounded sequences in $\mathcal{C}^1_{\bm{m}}$ are compact and that $\mathcal{F}^1$ is lower semicontinuous with respect to the topology $\tau_{\C^1}$.  

\begin{figure}[ht]
     \centering
     \includegraphics{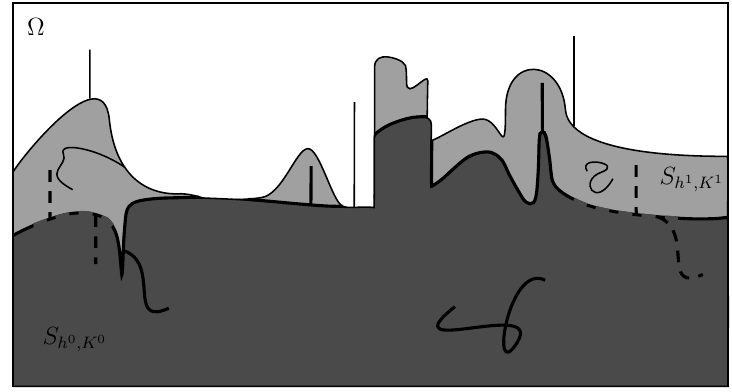}
     \caption{\small 
A single-layer film  (on the substrate 0th layer $S_{h^0,K^0}$) associated to an admissible configuration $(S_{h^1,K^1}, S_{h^0,K^0},u)\in\C^1_\textbf{m}$   (see Definition \ref{def:multilayers}) is represented by indicating each $j$th  layer with a gray color with decreasing value with respect to the increasing order of the index $j=0,1$ and each $j$th layer with a thinner line with respect to the increasing order of the index $j=0,1$. Furthermore, in the $0$th  layer we distinguish between its coherent and incoherent portions by using a dashed or a continuous line, respectively.
     }
     \label{mlfig:my_label2}
 \end{figure}

 \begin{proposition}
\label{mlcor:compactness}
Let $\{(S_{h^1_k,K^1_k}, S_{h^0_k,K^0_k} )\} \subset \mathcal{B}^1_\mathbf{m}$ be such that 
		\begin{equation}
  \label{eq:equibounded}
		   \sup_{k \in \N}{\left(\mathcal{S}^1(S_{h^1_k,K^1_k}, S_{h^0_k,K^0_k} ) {+  \mathcal{L}^2(S_{h^1_k,K^1_k})}\right)} < \infty .
		\end{equation}
	Then, there exist a not relabeled subsequence $\{(S_{h^1_k,K^1_k}, S_{h^0_k,K^0_k} )\} \subset \B^1_\mathbf{m}$ and $(S_{h^1,K^1}, S_{h^0,K^0}) \in \mathcal{B}^1_\mathbf{m}$ such that $(S_{h^1_k,K^1_k}, S_{h^0_k,K^0_k} )\xrightarrow{\tau_{\B^1}}(S_{h^1,K^1}, S_{h^0,K^0})$.   
 \end{proposition}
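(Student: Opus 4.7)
My strategy is to embed the sequence into the two-phase framework of \cite{LlP} and invoke the compactness result \cite[Theorem 4.2]{LlP}, then upgrade the abstract limit set to an admissible generalized subgraph. Since, as noted in the excerpt, $\mathcal{B}^1(\widetilde\Omega)\subset \mathcal{B}(\widetilde\Omega)$ and $\mathcal{B}^1_\mathbf{m}(\widetilde\Omega)\subset \mathcal{B}_\mathbf{m}(\widetilde\Omega)$ for any bounded rectangle $\widetilde\Omega$, this reduces the problem to a bounded container plus one additional argument identifying the limit as a subgraph.

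\textbf{Step 1 (reduction to a bounded container).} From (H1) and the energy bound \eqref{eq:equibounded}, I derive uniform upper bounds on $\mathcal{H}^1(\partial S_{h^i_k,K^i_k})$ and $\mathcal{H}^1(K^i_k)$ for $i=0,1$, and thereby on $\mathrm{Var}\,h^i_k$. Combined with the area bound on $S_{h^1_k,K^1_k}$ and the upper semicontinuity of $h^i_k$, this yields a constant $\widetilde L>0$ such that $S_{h^1_k,K^1_k},\,S_{h^0_k,K^0_k}\subset \widetilde\Omega := (-l,l)\times(-L,\widetilde L)$ for all $k$. Thus the sequence lies in $\mathcal{B}^1_\mathbf{m}(\widetilde\Omega)\subset \mathcal{B}_\mathbf{m}(\widetilde\Omega)$.

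\textbf{Step 2 (compactness in $\mathcal{B}_\mathbf{m}(\widetilde\Omega)$).} Identifying $(S_{h^1_k,K^1_k},S_{h^0_k,K^0_k})$ with the pair $(A_k,S_{h^0_k,K^0_k})\in \mathcal{B}_\mathbf{m}(\widetilde\Omega)$ with $A_k:=S_{h^1_k,K^1_k}$, I apply \cite[Theorem 4.2]{LlP} to extract a non-relabeled subsequence that $\tau_\mathcal{B}$-converges to some $(A,S_{h^0,K^0})\in \mathcal{B}_\mathbf{m}(\widetilde\Omega)$; in particular $\mathrm{sdist}(\cdot,\partial A_k)\to \mathrm{sdist}(\cdot,\partial A)$ and $\mathrm{sdist}(\cdot,\partial S_{h^0_k,K^0_k})\to \mathrm{sdist}(\cdot,\partial S_{h^0,K^0})$ locally uniformly in $\mathbb{R}^2$, together with the uniform bounds on $\mathcal{H}^1$ of the boundaries and the $m_0,m_1$-constraint on their connected components.

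\textbf{Step 3 (the limit $A$ is a generalized subgraph).} This is the genuinely new point, since \cite[Theorem 4.2]{LlP} produces only a measurable set $A$, whereas here I need $A=S_{h^1,K^1}$ with $(h^1,K^1)\in \mathrm{AHK}(\Omega)$. Using the uniform bounds on $\mathrm{Var}\,h^1_k$ and $\|h^1_k\|_\infty$ from Step~1, Helly's selection theorem produces a further subsequence with $h^1_k\to \bar h$ pointwise, and I take $h^1$ to be the upper semicontinuous envelope of $\bar h$, which still satisfies $\mathrm{Var}\,h^1<\infty$. By Blaschke's theorem I extract a closed Kuratowski/Hausdorff limit $K^1\subset \overline{\widetilde\Omega}$ of $\{K^1_k\}$; Gol\k ab's theorem ensures $K^1$ is $\mathcal{H}^1$-rectifiable with $\mathcal{H}^1(K^1)\le \liminf_k \mathcal{H}^1(K^1_k)<\infty$, so $(h^1,K^1)\in \mathrm{AHK}(\Omega)$. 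The identity $A=S_{h^1,K^1}$ then follows by comparing the $\tau_\mathcal{B}$-limit $A$ (characterized pointwise through signed-distance convergence) with the explicit description $S_{h^1}\setminus K^1$, using \eqref{mleq:uniongraphs} to match boundaries $\partial A=\partial S_{h^1}\cup K^1$.

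\textbf{Step 4 (membership in $\mathcal{B}^1_\mathbf{m}$).} The ordering $h^0\le h^1$ passes to the limit from $h^0_k\le h^1_k$, and the disjointness $\partial S_{h^1,K^1}\cap \mathrm{Int}(S_{h^0,K^0})=\emptyset$ follows from the corresponding property at level $k$ and the two signed-distance convergences. The connected-component constraints are already preserved by \cite[Theorem 4.2]{LlP}. Hence $(S_{h^1,K^1},S_{h^0,K^0})\in \mathcal{B}^1_\mathbf{m}$ and $(S_{h^1_k,K^1_k},S_{h^0_k,K^0_k})\xrightarrow{\tau_{\mathcal{B}^1}}(S_{h^1,K^1},S_{h^0,K^0})$.

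\textbf{Main obstacle.} The delicate step is the reconciliation in Step~3: the $\tau_\mathcal{B}$-limit set $A$ is obtained through an abstract signed-distance compactness, while the candidate representation $S_{h^1,K^1}$ is built from independent Helly/Blaschke extractions of height and crack subsequences. Carefully checking $\partial A=\partial S_{h^1}\cup K^1$ as sets (not only as measures) and ensuring that the upper semicontinuous envelope of the pointwise limit $\bar h$ is compatible with signed-distance convergence — in particular, handling the countable vertical filaments in $\partial S_{h^1}$ at the jump points of $h^1$ — is where most of the technical effort will be concentrated.
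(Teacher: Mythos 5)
Your Steps 1, 2, and 4 follow the paper's route: confine to a bounded rectangle $\widetilde\Omega$, embed into the two-phase family $\mathcal{B}_\mathbf{m}(\widetilde\Omega)$ of \cite{LlP}, invoke \cite[Theorem 4.2]{LlP} for $\tau_\mathcal{B}$-compactness, and transfer the ordering and the disjointness condition to the limit. (The paper gets the confinement directly from \cite[Theorem 3.47]{AFP}; your route via the variation bound on $h^1_k$ is essentially equivalent.)

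Step 3, however, contains a genuine gap. The paper does not re-derive the subgraph identification: it invokes the second statement of \cite[Lemma 3.8]{LlP}, which already produces $(h^1,K^1)\in\mathrm{AHK}(\Omega)$ with $S=S_{h^1,K^1}$ and, crucially, defines the limiting height via
$$h^1(x_1):=\sup\Bigl\{\,\limsup_{k\to\infty}h^1_k(x_1^k)\,:\,x_1^k\to x_1\,\Bigr\},$$
a Kuratowski-type upper limit along \emph{all} approximating sequences $x_1^k\to x_1$. Your replacement — Helly's selection theorem to get a pointwise limit $\bar h$, followed by its upper semicontinuous envelope — does not produce this object. Concretely, take $h^1_k\equiv 0$ except $h^1_k(1/k)=1$, with $K^1_k=\emptyset$. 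Each $h^1_k$ is admissible with $\mathrm{Var}\,h^1_k=2$, and $\mathrm{sdist}(\cdot,\partial S_{h^1_k})\to\mathrm{sdist}(\cdot,\partial S)$ where $S=\{y\le 0\}\cup(\{0\}\times[0,1])$, i.e.\ $h^1(0)=1$ and $h^1\equiv 0$ elsewhere. But the pointwise limit of any subsequence of $h^1_k$ is $\bar h\equiv 0$, and its upper semicontinuous envelope is again $\equiv 0$: your construction loses the filament and gives $S_{h^1}\neq S$. The same difficulty infects the crack set: your $K^1$ is an independent Hausdorff limit and is not a priori related to the signed-distance limit $A$, so the asserted identity $\partial A=\partial S_{h^1}\cup K^1$ has no basis in your extraction scheme. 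You flag this as the "main obstacle", but the fix is not "more technical effort" along your route — the Helly/envelope construction is simply the wrong object, and you would need to replace it with the Kuratowski-limsup formula above (i.e.\ re-prove \cite[Lemma 3.8]{LlP}), at which point you are better off citing that lemma as the paper does.

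Two smaller remarks. First, what you call "Blaschke's theorem" for closed sets is really Kuratowski/Hausdorff compactness; Blaschke's classical statement is for convex bodies. Second, Gol\k{a}b's theorem requires a uniform bound on the number of connected components of $K^1_k$, which you do have from the $\mathbf{m}$-constraint on $\partial S_{h^1_k,K^1_k}$, but this should be said explicitly since it is the whole reason the family $\mathcal{B}^1_\mathbf{m}$ (and not $\mathcal{B}^1$) is used.
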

\begin{proof}

We begin by observing that in view of \cite[Theorem 3.47]{AFP}  from \eqref{eq:equibounded} it follows that there exists $\wt L>0$ such that for every $k \in \N$, $S_{h^1_k,K^1_k} \subset (-l,l)\times(-L, \wt L)=: \wt{\Om}$.
Since $(S_{h^1_k,K^1_k}, S_{h^0_k,K^0_k} ) \in \B^1_\textbf{m}$ for every $k \in \N$, by \eqref{mleq:equivalenceenergiesS}, where in \cite{LlP} we consider
$\varphi_0 = \varphi_{\mathrm{S}}, \varphi_1  = \varphi_{\mathrm{F}}, \varphi_{01}  = \varphi_{\mathrm{FS}}$, $\varphi^1_{01} = \varphi $ and $\varphi^2_{01} = \varphi'$, we have that  
   \begin{equation}         
   \label{mleq:corollarycompactness1} 
      \sup_{k \in \N}{\mathcal{S}(S_{h^1_k,K^1_k},S_{h^0_k,K^0_k})} = \sup_{k \in \N}{\mathcal{S}^1(S_{h^1_k,K^1_k}, S_{h^0_k,K^0_k} )} < \infty.
   \end{equation} 
By applying  \cite[Theorem 4.2]{LlP} with respect to the region $\wt \Om$, there exist a not relabeled subsequence $\{(S_{h^1_k,K^1_k},S_{h^0_k, K^0_k})\} \subset \B_\textbf{m}$ and $(S,S_{h^0,K^0}) \in  \B_\textbf{m}$ such that 
\begin{equation}
    \label{mleq:corollarycomp2}
    (S_{h^1_k,K^1_k},S_{h^0_k, K^0_k}) \xrightarrow{\tau_\B} (S,S_{h^0,K^0}).
\end{equation} 
By definition of $\tau_\B$-convergence and by the second statement of \cite[Lemma 3.8]{LlP} 
there exists $(h^1,K^1) \in \textrm{ AHK} (\Omega)$ such that 
\begin{equation}
    \label{mleq:corollarycomp2_bis}
    S= S_{h^1,K^1}.
 \end{equation}   
In view of the definition of $h^0$ and $h^1$ that comes from \cite[Lemma 3.8]{LlP}, we see that 
$$
    h^0(x_1):= \sup \{ \limsup_{k \to \infty} h^0_k (x_1^k) : x^k_1 \to x_1 \} \le  \sup \{ \limsup_{k \to \infty} h^1_k (x_1^k) : x^k_1 \to x_1 \} =: h^1(x_1)
$$
for every $x_1 \in [-l,l]$. Thus, $(S_{h^1,K^1}, S_{h^0,K^0}) \in \mathcal{B}^1_\mathbf{m}$. Finally, from \eqref{mleq:corollarycomp2} and \eqref{mleq:corollarycomp2_bis}, and by the fact that the $\tau_{\B^1}$-convergence is similar to the $\tau_{\B}$-convergence of \cite{LlP} we obtain that 
$$
(S_{h^1_k,K^1_k}, S_{h^0_k,K^0_k} )\xrightarrow{\tau_{\B^1}}(S_{h^1,K^1}, S_{h^0,K^0})
$$
which concludes the proof.
\end{proof}

We are in the position to prove that $\mathcal{C}^1_\mathbf{m}$ is compact with respect to the topology $\tau_{\mathcal{C}^1}$. 

	\cite[Theorem 4.3]{LlP}
	\begin{theorem}
	    [Compactness of $\mathcal{C}^1_\mathbf{m}$]
		\label{mlthm:compactnesslayers}
	Let $\{(S_{h^1_k,K^1_k}, S_{h^0_k,K^0_k}, u_k)\}_{k \in \N} \subset \mathcal{C}^1_\mathbf{m}$ be such that
		\begin{equation}
			\label{mleqthm:compactnesslayers}
			\sup_{k\in\mathbb{N}}{ \left( \mathcal{F}^1(S_{h^1_k,K^1_k}, S_{h^0_k,K^0_k}, u_k)  {+  \mathcal{L}^2(S_{h^1_k,K^1_k})} \right)} < \infty.
		\end{equation}
	Then, there exist an admissible configuration $(S_{h^1,K^1}, S_{h^0,K^0},u) \in \mathcal{C}^1_\mathbf{m}$ of finite $\mathcal{F}^1$ energy, a subsequence $\{( S_{h^1_\kn,K^1_\kn}, S_{h^0_\kn,K^0_\kn}, u_{k_n})\}_{n \in \N}$, a sequence $\{(S_{h^1_\kn, \wt K^1_n}, S_{h^0_\kn,\wt K^0_n} , u_\kn)\}_{n \in \N} \subset \mathcal{C}^1_\mathbf{m}$ and a sequence $\{b_n\}_{n\in \N}$ of piecewise rigid displacements associated to $S_{h^1_{k_n}, \widetilde{K}^1_n}$ such that 
 $$
 (S_{h^1_\kn, \wt K^1_n}, S_{h^0_\kn,\wt K^0_n} , u_\kn + b_n) \xrightarrow{\tau_{\mathcal{C}^1}} (S_{h^1,K^1}, S_{h^0,K^0},u)
 $$
 and 
		\begin{equation}\label{ml2liminfequalliminflayers}
			\liminf_{n \rightarrow \infty}{\mathcal{F}^1( S_{h^1_\kn,K^1_\kn}, S_{h^0_\kn,K^0_\kn}, u_{k_n}) } = \liminf_{n \rightarrow \infty}{\mathcal{F}^1(S_{h^1_\kn, \wt K^1_n}, S_{h^0_\kn,\wt K^0_n} , u_\kn + b_n) }.
		\end{equation}
	\end{theorem}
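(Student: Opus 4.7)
The plan is to reduce the statement to its two-phase counterpart \cite[Theorem 4.3]{LlP} by viewing every element of $\mathcal{C}^1_\mathbf{m}$ as an element of the larger class $\mathcal{C}_\mathbf{m}(\wt\Om)$ considered in \cite{LlP}, for a suitable bounded container $\wt\Om$, and then to promote the resulting limit back into the smaller class $\mathcal{C}^1_\mathbf{m}$. First, I would show that hypothesis \eqref{mleqthm:compactnesslayers} forces the existence of $\wt L>0$ with $S_{h^1_k,K^1_k}\subset\wt\Om:=(-l,l)\times(-L,\wt L)$ for every $k$: the surface energy bound combined with the lower bound in (H1) controls $\Hs^1(\partial S_{h^1_k})$, hence $\text{Var}\,h^1_k$, uniformly, and the uniform volume bound $\mathcal{L}^2(S_{h^1_k,K^1_k})\le C$ then rules out unbounded peaks by bounded variation. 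On the bounded container $\wt\Om$ the inclusion $\mathcal{C}^1_\mathbf{m}\subset\mathcal{C}_\mathbf{m}(\wt\Om)$ holds by Definition \ref{def:multilayers}, and the energies coincide by \eqref{mleq:equivalenceenergiesS}.

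Next, I would feed the sequence $\{(S_{h^1_k,K^1_k},S_{h^0_k,K^0_k},u_k)\}$ into \cite[Theorem 4.3]{LlP}, obtaining a subsequence indexed by $k_n$, enlarged cracks $\wt K^1_n\supset K^1_{k_n}$ and $\wt K^0_n\supset K^0_{k_n}$, and piecewise rigid displacements $b_n$ associated to $S_{h^1_{k_n},\wt K^1_n}$, such that the modified configurations $\tau_\mathcal{C}$-converge to some $(A,S_{h^0,K^0},u)\in\mathcal{C}_\mathbf{m}(\wt\Om)$, with the liminf identity \eqref{ml2liminfequalliminflayers} inherited directly from the two-phase theorem. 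To upgrade this limit to $\mathcal{C}^1_\mathbf{m}$, I would argue exactly as in Proposition \ref{mlcor:compactness}: since each $(S_{h^1_k,K^1_k},S_{h^0_k,K^0_k})\in\B^1$, the second statement of \cite[Lemma 3.8]{LlP} applied to $\{h^1_{k_n}\}$ yields $(h^1,K^1)\in\text{AHK}(\Om)$ with $A=S_{h^1,K^1}$. The ordering $h^0\le h^1$ is preserved by passing to the $\limsup$ envelope on both sides of $h^0_{k_n}\le h^1_{k_n}$, via the characterisation of $h^0,h^1$ given by \cite[Lemma 3.8]{LlP}. Together with $(S_{h^0,K^0})\in\text{AS}(\Om)$ this places $(S_{h^1,K^1},S_{h^0,K^0})$ in $\B^1_\mathbf{m}$, and the coincidence of $\tau_\mathcal{C}$ with $\tau_{\mathcal{C}^1}$ on the bounded container yields the asserted convergence in $\mathcal{C}^1_\mathbf{m}$.

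The main obstacle lies not in Step 2, which is a plug-and-play application of the two-phase result, but in verifying in Step 3 that the limit object and all the intermediate modified configurations respect the rigid sub-subgraph structure of $\B^1$. For the limit this is handled by \cite[Lemma 3.8]{LlP} and the stability of pointwise orderings under $\limsup$ envelopes of upper semicontinuous functions. For the pre-limit modifications, one has to inspect the construction of $\wt K^i_n$ in the proof of \cite[Theorem 4.3]{LlP}: the enlargements only add $\Hs^1$-rectifiable pieces contained respectively in $\overline{\Int(S_{h^i_{k_n}})}$, leaving the height functions $h^i_{k_n}$ untouched, so the membership $(S_{h^1_{k_n},\wt K^1_n},S_{h^0_{k_n},\wt K^0_n})\in\B^1_\mathbf{m}$ is immediate from the original $h^0_{k_n}\le h^1_{k_n}$ and the constraint on connected components carried along from $\mathcal{B}_\mathbf{m}(\wt\Om)$.
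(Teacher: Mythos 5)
Your proposal is correct and follows essentially the same route as the paper: pass to a bounded container $\wt\Om$ using the energy-plus-volume bound, invoke the two-phase compactness result \cite[Theorem 4.3]{LlP} there (justified by $\mathcal{F}\equiv\mathcal{F}^1$ on $\mathcal{C}^1$), and then promote both the limit object (via \cite[Lemma 3.8]{LlP}, exactly as in Proposition~\ref{mlcor:compactness}) and the pre-limit modifications back into $\mathcal{C}^1_{\mathbf{m}}$. The only place where the paper is more explicit than you are is in verifying that the modified first component $\wt S_n$ produced by the two-phase theorem really is a generalized subgraph $S_{h^1_{k_n},\wt K^1_n}$ — the paper carries out the set-algebra identity \eqref{mleq:equivalencetildeK} using the structural formula $\wt{S}_n=S_{h^1_{k_n},K^1_{k_n}}\setminus(\partial\wt S_n\setminus\partial S_{h^1_{k_n},K^1_{k_n}})$ extracted from the two-phase proof; your remark that the crack enlargement leaves the height untouched is the right idea but should be pinned down with this identity to make the membership in $\C^1_{\mathbf m}$ rigorous.
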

 
\begin{proof}
We begin by observing that in view of \cite[Theorem 3.47]{AFP}  from \eqref{eq:equibounded} it follows that  there exists $\wt L>0$ such that for every $k \in \N$, $S_{h^1_k,K^1_k} \subset (-l,l)\times(-L, \wt L)=: \wt{\Om}$.
In view of the observations at the beginning of the section by \eqref{mleq:equivalenceenergiesS} and \eqref{mleqthm:compactnesslayers} we have that
     $$
        \sup_{k\in\mathbb{N}}{ \mathcal{F}(S_{h^1_k,K^1_k} , S_{h^0_k,K^0_k}, u_k) } = \sup_{k\in\mathbb{N}}{ \mathcal{F}^1(S_{h^1_k,K^1_k}, S_{h^0_k,K^0_k}, u_k) } < \infty,
     $$
where $\mathcal{F}: \mathcal{C}(\wt \Om)\to [0,\infty]$ is the total energy considered in \cite{LlP}. Hence, by applying \cite[Theorem 4.3]{LlP} with respect to the region  $\wt \Om$, 
 we deduce that  there exist a triple $(S,S_{h^0,K^0},u) \in \mathcal{C}_\mathbf{m}(\wt{\Om})$ of finite $\mathcal{F}$-energy, a subsequence $\{(S_{h^1_{k_n}, K^1_\kn},S_{ h^0_{k_n},K^0_{k_n} }, u_{k_n} ) \}_{n \in \N}$, a sequence $\{ (\widetilde{S}_n, S_{h^0_{k_n},\widetilde{K}^0_n}, u_{k_n} ) \}_{n \in \N} \subset \mathcal{C}_\mathbf{m}$ and a sequence $\{b_n\}_{n\in \N}$ of piecewise rigid displacements associated to $\widetilde{S}_n$ such that 
 \begin{equation}
 \label{mlconvcor}
 (\widetilde{S}_n,S_{ h^0_{k_n}, \widetilde{K}^0_n}, u_{k_n} + b_n ) \xrightarrow{\tau_\mathcal{C}} (S,S_{h^0,K^0},u)
 \end{equation}
 and 
\begin{equation}\label{mlliminfequalliminfcor}
			\liminf_{n \rightarrow \infty}{\mathcal{F} (S_{h^1_{k_n}, K^1_\kn}, S_{h^0_{k_n},K^0_{k_n} },  u_{k_n} ) } = \liminf_{n \rightarrow \infty}{\mathcal{F} (\widetilde{S}_n, S_{h^0_{k_n}, \widetilde{K}^0_n}, u_{k_n} + b_n) }.
		\end{equation}
  In view of the proof of \cite[Theorem 4.3]{LlP} 
 we have that   
\begin{equation}\label{mldefAntilde}
        \wt{S}_n := S_{h^1_\kn,K^1_\kn} \setminus \left( \partial \wt{S}_n \setminus \partial S_{h^1_\kn,K^1_\kn}  \right)
  \end{equation}
  In analogy to the definition of $\wt{K}^0_n$ in the proof of \cite[Theorem 4.3]{LlP}, 
  we define
  \begin{equation}\label{mldefK1tilde}
  \wt{K}^1_n := K^1_n \cup  \left( \partial \wt{S}_n \setminus \partial S_{h^1_\kn,K^1_\kn}  \right)
  \end{equation}
  and we claim that $\wt{S}_n =S_{h^1_\kn, \wt{K}^1_n}$. Indeed,   we have that
  \begin{equation}
      \begin{split}
          \label{mleq:equivalencetildeK}
      S_{h^1_\kn, \wt{K}^1_n} &:= \partial S_{h^1_\kn} \cup \left( S_{h^1_\kn} \setminus  \wt{K}^1_n \right) =\partial S_{h^1_\kn} \cup \left( S_{h^1_\kn} \setminus  \left( K^1_n \cup  \left( \partial \wt{S}_n \setminus \partial S_{h^1_\kn,K^1_\kn}  \right) \right) \right) \\
      & = \partial S_{h^1_\kn} \cup  \left( S_{h^1_\kn} \cap  \left( (K^1_n)^c \cap  \left( \partial \wt{S}_n \setminus \partial S_{h^1_\kn,K^1_\kn}  \right)^c \right) \right) \\
      & = \left( \partial S_{h^1_\kn} \cup \left( S_{h^1_\kn} \setminus  {K}^1_n \right) \right) \cap \left( \partial S_{h^1_\kn} \cup  \left( \partial \wt{S}_n \setminus \partial S_{h^1_\kn,K^1_\kn}  \right)^c\right) \\
      &=: S_{h^1_\kn,K^1_\kn} \cap \left( \partial S_{h^1_\kn} \cup (\partial \wt{S}_n)^c\cup \partial S_{h^1_\kn, K^1_\kn}  \right) 
      = S_{h^1_\kn,K^1_\kn} \setminus  (\partial \wt{S}_n \setminus\partial S_{h^1_\kn, K^1_\kn}) \\
      & =: \wt{S}_n,
      \end{split}
  \end{equation}
  where we used \eqref{mleq:substrateheight} in the first and fifth equalities, \eqref{mldefK1tilde} in the second equality, De Morgan's laws in the third, fourth and sixth equalities, and \eqref{mldefAntilde} in the last equality, and hence  $( S_{h^1_\kn,\wt K^1_\kn}, S_{h^0_\kn, \wt K^0_\kn}  , u_{k_n}) \in \mathcal{C}^1_\textbf{m}$. 
  
  Furthermore, by \eqref{mleqthm:compactnesslayers} and the non-negativeness of $\mathcal{F}$ it follows from Proposition \ref{mlcor:compactness} that $S = S_{h^1,K^1}$ for a proper pair $(h^1,K^1) \in \text{AHK}(\Om)$ and, in particular, we have that
$(S_{ h^1,K^1}, S_{ h^0,K^0},u) \in \mathcal{C}^1_\textbf{m}$. Finally, in view of the definition of $\tau_{\mathcal{C}^1}$-convergence, by \eqref{mlconvcor} we obtain that 
 $$
 (S_{h^1_\kn,\wt K^1_\kn}, S_{h^0_\kn, \wt K^0_\kn}  , u_{k_n} + b_n) \xrightarrow{\tau_{\mathcal{C}^1}} (S_{h^1,K^1},S_{h^0,K^0} ,u), 
 $$
 and, by \eqref{mleq:equivalenceenergiesS}, 
 \eqref{mlliminfequalliminfcor} we obtain \eqref{ml2liminfequalliminflayers}, which  concludes the proof.
  \end{proof}

Now, by applying \cite[Theorem 5.14]{LlP} 
we prove that $\mathcal{F}^1$ is lower semicontinuous with respect to the $\tau_{\mathcal{C}^1}$-topology. 
	
	\begin{theorem}[Lower semicontinuity of $\mathcal{F}^1$]
		\label{mlthm:lowersemicontinuitylayer}
		Assume (H1)-(H3). Let $\{ (S_{h^1_k,K^1_k}, S_{h^0_k,K^0_k} , u_k) \}_{k \in \N} \subset \mathcal{C}^1_\mathbf{m}$ and $(S_{h^1,K^1}, S_{h^0,K^0} ,u) \in \mathcal{C}^1_\mathbf{m}$ be such that 
  $
  (S_{h^1_k,K^1_k}, S_{h^0_k,K^0_k} , u_k) \xrightarrow{\tau_{\mathcal{C}^1}} (S_{h^1,K^1}, S_{h^0,K^0} ,u) .
  $ 
  Then 
\begin{equation}\label{mleq:corlower0}
			\mathcal{F}^1  (S_{h^1,K^1}, S_{h^0,K^0} ,u) \le \liminf_{k \rightarrow \infty}{ \mathcal{F}^1 (S_{h^1_k,K^1_k}, S_{h^0_k,K^0_k} , u_k) }.
		\end{equation}
	\end{theorem}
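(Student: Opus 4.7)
The plan is to deduce the statement directly from the two-phase lower semicontinuity result \cite[Theorem 5.14]{LlP} by exploiting the inclusion $\mathcal{C}^1_\mathbf{m} \subset \mathcal{C}_\mathbf{m}(\wt{\Om})$ on a suitably chosen bounded container, together with the identifications of energies and topologies recalled at the beginning of this section. The guiding observation is that in the single-layer case $\alpha=1$ the model of this paper is literally the two-phase model of \cite{LlP}, up to imposing the additional exterior graph condition on the film phase: so the hard analytic work (the lower semicontinuity of the surface energy $\mathcal{S}$ with its seven-case integrand and of the elastic energy $\mathcal{W}$ along the signed-distance plus a.e.\ convergence) can be quoted rather than redone.

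First, I would reduce to a finite energy setting by noticing that if the right-hand side of \eqref{mleq:corlower0} is $+\infty$ there is nothing to prove, so after passing to a (not relabeled) subsequence achieving the liminf we may assume $\sup_k \mathcal{F}^1(S_{h^1_k,K^1_k}, S_{h^0_k,K^0_k}, u_k) < \infty$. Combining this with the uniform perimeter bound that is part of the definition of $\tau_{\C^1}$-convergence and with \cite[Theorem 3.47]{AFP} (exactly as in the first line of the proof of Proposition~\ref{mlcor:compactness}), I would extract a constant $\wt L > 0$ such that $S_{h^1_k,K^1_k} \subset (-l,l) \times (-L,\wt L) =: \wt\Om$ for every $k$; the local uniform convergence of the signed distance functions transfers the same containment to the limit $S_{h^1,K^1}$.

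Second, I would observe that every configuration of $\mathcal{C}^1_\mathbf{m}$ confined to $\wt\Om$ belongs to $\mathcal{C}_\mathbf{m}(\wt\Om)$, and that the $\tau_{\mathcal{C}^1}$-convergence reduces, on this subclass, to the $\tau_{\mathcal{C}}$-convergence of \cite{LlP}, since both require the uniform bound $\sup_k \mathcal{H}^1(\partial S_{h^i_k,K^i_k}) < \infty$ for $i=0,1$, the local uniform convergence of the signed distances, and the a.e.\ convergence of the displacements. Under the dictionary $\varphi_0 = \varphi_{\mathrm{S}}$, $\varphi_1 = \varphi_{\mathrm{F}}$, $\varphi_{01} = \varphi_{\mathrm{FS}}$, $\varphi^1_{01}=\varphi$, $\varphi^2_{01} = \varphi'$ from Remark~\ref{alpha1}, hypotheses (H1)--(H3) translate into the corresponding assumptions of \cite{LlP}, and \eqref{mleq:equivalenceenergiesS} gives $\mathcal{F} \equiv \mathcal{F}^1$ on $\mathcal{C}^1_\mathbf{m}(\wt\Om)$.

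At this point I would simply invoke \cite[Theorem 5.14]{LlP} to obtain
$$
\mathcal{F}(S_{h^1,K^1}, S_{h^0,K^0}, u) \le \liminf_{k \to \infty} \mathcal{F}(S_{h^1_k,K^1_k}, S_{h^0_k,K^0_k}, u_k),
$$
and conclude \eqref{mleq:corlower0} by replacing $\mathcal{F}$ with $\mathcal{F}^1$ on both sides via \eqref{mleq:equivalenceenergiesS}. The only step that is not purely bookkeeping is the verification that the graph constraint on the film phase, which has no analogue in \cite{LlP}, is preserved in the limit; this, however, is automatic because the $\tau_{\B^1}$-convergence already implies, via the characterization of subgraphs as signed-distance limits and the second statement of \cite[Lemma~3.8]{LlP} used in Proposition~\ref{mlcor:compactness}, that the limit region is a generalized subgraph, so no extra argument beyond the application of the quoted theorem is required.
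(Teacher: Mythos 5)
Your argument is correct, and it takes a genuinely more economical route than the paper does. The paper's proof does not invoke the full two-phase lower semicontinuity of $\mathcal{F}$ from \cite{LlP} as a black box; instead, after reducing to the bounded container $\wt\Om$ (exactly as you do), it applies only the \emph{surface energy} lower semicontinuity \cite[Theorem 5.13]{LlP} to get $\mathcal{S}^1$-lsc, and then proves the \emph{elastic} lower semicontinuity separately by a standard weak-compactness argument: for $D \subset\subset \Int(S_{h^1,K^1})$, one has $D\subset\subset\Int(S_{h^1_k,K^1_k})$ for $k$ large by signed-distance convergence, $Eu_k$ is bounded in $L^2(D)$, hence $Eu_k\rightharpoonup Eu$, and convexity of $W$ gives lsc on $D$; one then lets $D\nearrow\Int(S_{h^1,K^1})$ and combines via superadditivity of the liminf. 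Your proposal quotes the packaged full-energy result \cite[Theorem 5.14]{LlP} instead (which is also what the paper's own preamble to the theorem advertises, even though the proof goes through the split). Both routes are valid once the dictionary $\varphi_0=\varphi_{\mathrm S}$, $\varphi_1=\varphi_{\mathrm F}$, $\varphi_{01}=\varphi_{\mathrm{FS}}$ and the identity $\mathcal{F}\equiv\mathcal{F}^1$ of \eqref{mleq:equivalenceenergiesS} are in place and the topologies are identified as you observe; the paper's decomposition has the advantage of producing exactly the surface lsc inequality \eqref{mleq:corollarysemicontinuityS} which is then reused verbatim as the base case of the induction in Theorem \ref{mlthm:lowersemicontinuitymultilayer}, whereas your black-box approach would not directly yield that intermediate statement. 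One small remark: the final paragraph of your proposal, where you verify that the limit region is a generalized subgraph, is superfluous here, since the statement of the theorem already assumes that the limit configuration belongs to $\mathcal{C}^1_\mathbf{m}$ (that verification belongs in the compactness result, Proposition \ref{mlcor:compactness}, not in the lower semicontinuity).
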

 \begin{proof}
 Without loss of generality, we assume that the right side of \eqref{mleq:corlower0} is finite. In view of \cite[Theorem 3.47]{AFP} and from the fact that and since $
  (S_{h^1_k,K^1_k}, S_{h^0_k,K^0_k} , u_k) \xrightarrow{\tau_{\mathcal{C}^1}} (S_{h^1,K^1}, S_{h^0,K^0} ,u) 
  $it follows that there exists $\wt L>0$ such that for every $k \in \N$, $S_{h^1_k,K^1_k} \subset (-l,l)\times(-L, \wt L)=: \wt{\Om}$.
 Since the topology $\tau_{\B}$ considered in \cite{LlP} coincide with the topology $\tau_{\B^1}$, by \eqref{mleq:equivalenceenergiesS} and by applying \cite[Theorem 5.13]{LlP} applied in the regions $\Om = \wt \Om$, we have that 
\begin{equation}
    \label{mleq:corollarysemicontinuityS}
         \mathcal{S}^1 \left(S_{h^1,K^1},S_{h^0,K^0} \right)\le \liminf_{k \rightarrow \infty}{ \mathcal{S} ^1\left(S_{h^1_k,K^1_k}, S_{h^0_k,K^0_k} \right) }.
     \end{equation}
Now, we are going to prove that the elastic energy is lower semicontinuous. Indeed, let $D \subset \subset \textrm{Int}(S_{h^1,K^1})$, by properties of the signed distance convergence we have that 
$D \subset \subset \Int{(S_{h^1_k,K^1_k})}$ for $k$ large enough. By definition of $\tau_{\mathcal{C}^1}$ convergence we have that $u_k \rightarrow u$ a.e. in $D$. Furthermore, since $E \,u_k$ are bounded in the $L^2(D)$ norm, we have that $E\, u_k \rightharpoonup E\,u$ in $L^2 (D)$. By convexity of $W$ we obtain that
\begin{equation*}
    \int_{D} W(x, E\,u - {E_0^1}) \, dx \le \liminf_{k \rightarrow \infty} \int_{D} W(x, E\,u_k - { E_0^1}) \, dx \le \liminf_{k \rightarrow +\infty}{\mathcal{W}(S_{h^1_k,K^1_k}, S_{h^0_k,K^0_k}, u_k)}
\end{equation*}
By taking $D \nearrow \Int (S_{h^1,K^1})$ we conclude that 
\begin{equation}
    \label{eq:elasticitytwolayres}
    {\mathcal{W}(S_{h^1,K^1}, S_{h^0,K^0}, u)} \le \liminf_{k \rightarrow +\infty}{\mathcal{W}(S_{h^1_k,K^1_k}, S_{h^0_k,K^0_k}, u_k)}.
\end{equation}
By \eqref{mleq:corollarysemicontinuityS} and \eqref{eq:elasticitytwolayres} and thanks to the superadditivity of the \emph{liminf}, we get that
\begin{equation*}
\begin{split}
    \mathcal{F}^1  (S_{h^1,K^1}, S_{h^0,K^0} ,u) &:= {\mathcal{W}(S_{h^1,K^1}, S_{h^0,K^0}, u)} + \mathcal{S}^1 \left(S_{h^1,K^1},S_{h^0,K^0} \right) \\
    &\le \liminf_{k \rightarrow +\infty}{\mathcal{W}(S_{h^1_k,K^1_k}, S_{h^0_k,K^0_k}, u_k)} + \liminf_{k \rightarrow \infty}{ \mathcal{S} ^1\left(S_{h^1_k,K^1_k}, S_{h^0_k,K^0_k} \right) } \\
    & \le \liminf_{k \rightarrow +\infty} {\mathcal{W}(S_{h^1_k,K^1_k}, S_{h^0_k,K^0_k}, u_k) + \mathcal{S} ^1\left(S_{h^1_k,K^1_k}, S_{h^0_k,K^0_k} \right)} \\
    &=: \liminf_{k \rightarrow \infty}{ \mathcal{F}^1 (S_{h^1_k,K^1_k}, S_{h^0_k,K^0_k} , u_k) },
\end{split}
\end{equation*}
which concludes the proof.
 \end{proof}

 Finally, we state the main result of this section. The following result is the analogous result of Theorem \ref{mlthm:existence}, more precisely, we prove the existence of minimizers for a volume constrained problem and for an unconstrained problem, respectively, with respect to the admissible family of deformable film and substrate $\mathcal{C}^1_{\textbf{m}}$ for every $\textbf{m}:= (m_0, m_1)\in \N \times \N$.  

 \begin{theorem}[Existence of minimizers]
	\label{mlcor:maincorollary}
	Assume (H1)-(H3) and let $\mathbbm{v}_0, \mathbbm{v}_1 \in [\mathcal{L}^2({\Om}/2), \mathcal{L}^2({\Om})]$ such that $\mathbbm{v}_0 \le \mathbbm{v}_1$. Then for every $\mathbf{m}= (m_0,m_1) \in \N^2$, the volume constrained minimum problem 
	\begin{equation}
		\label{mleq:fconst}
		\inf_{(S_{h^1,K^1}, S_{h^0,K^0} ,u) \in \mathcal{C}^1_\mathbf{m},\, \mathcal{L}^2 ({S_{h^1, K^1}})= \mathbbm{v}_1,\, \mathcal{L}^2({S_{h^0,K^0}})= \mathbbm{v}_0}{\mathcal{F}^1(S_{h^1,K^1}, S_{h^0,K^0} ,u)}
	\end{equation}
	and the unconstrained minimum problem
	\begin{equation}
		\label{mleq:funcost}
		\inf_{(S_{h^1,K^1}, S_{h^0,K^0} ,u)\in \mathcal{C}^1_\textbf{m}}{\mathcal{F}^{1, \bm{\lambda}} (S_{h^1,K^1}, S_{h^0,K^0} ,u)}
	\end{equation}
	have solution, where $\mathcal{F}^{1, \bm{\lambda}}  : \mathcal{C}^1_\mathbf{m} \rightarrow \R$ is defined as
	\begin{equation*}
		\mathcal{F}^{1, \bm{\lambda}}(S_{h^1,K^1}, S_{h^0,K^0} ,u) := \mathcal{F}^1(S_{h^1,K^1}, S_{h^0,K^0} ,u)  + \sum_{i= 0}^1 \lambda_i \abs{ \mathcal{L}^2 (S_{h^i,K^i})- \mathbbm{v}_i }.
	\end{equation*}
	for any $\bm{\lambda} = (\lambda_0, \lambda_1) \in \R^2$ such that $\lambda_0, \lambda_1>0$.
\end{theorem}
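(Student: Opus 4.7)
My plan is to apply the direct method, using Theorem \ref{mlthm:compactnesslayers} for compactness and Theorem \ref{mlthm:lowersemicontinuitylayer} for lower semicontinuity. Both minimum problems follow the same skeleton, so I would treat the constrained problem \eqref{mleq:fconst} first and then indicate the minor modifications for the unconstrained problem \eqref{mleq:funcost}.

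For \eqref{mleq:fconst}, I would start by selecting a minimizing sequence $\{(S_{h^1_k,K^1_k}, S_{h^0_k,K^0_k}, u_k)\}_{k\in\N} \subset \mathcal{C}^1_\mathbf{m}$ with $\mathcal{L}^2(S_{h^i_k,K^i_k}) = \mathbbm{v}_i$ for $i=0,1$. Since the volume is frozen at $\mathbbm{v}_1 \leq \mathcal{L}^2(\Om)$ and the energies are bounded, the hypothesis \eqref{mleqthm:compactnesslayers} is satisfied. Theorem \ref{mlthm:compactnesslayers} then yields (up to a not relabeled subsequence) a modified sequence $(S_{h^1_{k_n},\widetilde K^1_n}, S_{h^0_{k_n},\widetilde K^0_n}, u_{k_n} + b_n) \xrightarrow{\tau_{\mathcal{C}^1}} (S_{h^1,K^1}, S_{h^0,K^0}, u) \in \mathcal{C}^1_\mathbf{m}$ satisfying the identity \eqref{ml2liminfequalliminflayers}. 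Because the modification only adds to each $K^i_{k_n}$ an $\mathcal{H}^1$-rectifiable subset of $\partial \widetilde{S}_n$, which is $\mathcal{L}^2$-negligible, the volumes are preserved: $\mathcal{L}^2(S_{h^i_{k_n},\widetilde{K}^i_n}) = \mathbbm{v}_i$.

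The next step is to pass the volume constraint to the limit. The local uniform convergence of the signed distances, combined with the uniform bound on $\mathcal{H}^1(\partial S_{h^i_{k_n},\widetilde{K}^i_n})$ guaranteed by \eqref{eq:equibounded}, implies that $\mathbbm{1}_{S_{h^i_{k_n},\widetilde{K}^i_n}} \to \mathbbm{1}_{S_{h^i,K^i}}$ in $L^1(\Om)$ (this is a standard consequence of Hausdorff convergence of boundaries of sets of finite perimeter with uniformly bounded perimeter, and is already used implicitly in \cite{LlP}). Hence $\mathcal{L}^2(S_{h^i,K^i}) = \mathbbm{v}_i$, so the limit is admissible. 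Applying Theorem \ref{mlthm:lowersemicontinuitylayer} to the modified converging sequence and then invoking \eqref{ml2liminfequalliminflayers}, I obtain
\[
\mathcal{F}^1(S_{h^1,K^1}, S_{h^0,K^0}, u) \le \liminf_{n\to\infty} \mathcal{F}^1(S_{h^1_{k_n},\widetilde K^1_n}, S_{h^0_{k_n},\widetilde K^0_n}, u_{k_n}+b_n) = \liminf_{n\to\infty} \mathcal{F}^1(S_{h^1_{k_n},K^1_{k_n}}, S_{h^0_{k_n},K^0_{k_n}}, u_{k_n}),
\]
which shows that the limit realizes the infimum.

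For the unconstrained problem \eqref{mleq:funcost} the same scheme works, applied to $\mathcal{F}^{1,\bm{\lambda}}$. Along any minimizing sequence the penalty term $\sum_i \lambda_i |\mathcal{L}^2(S_{h^i_k,K^i_k}) - \mathbbm{v}_i|$ is bounded, which, using $\mathbbm{v}_i \le \mathcal{L}^2(\Om)$ and $\lambda_i > 0$, again forces a uniform bound on $\mathcal{L}^2(S_{h^1_k,K^1_k})$ so that Theorem \ref{mlthm:compactnesslayers} applies. The penalty term is continuous with respect to the $L^1$-convergence of indicators derived above, so together with Theorem \ref{mlthm:lowersemicontinuitylayer} one obtains the lower semicontinuity of $\mathcal{F}^{1,\bm{\lambda}}$ and concludes as before. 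The main subtle point — and the only step where one must not be lazy — is precisely the verification that the crack-modification in Theorem \ref{mlthm:compactnesslayers} preserves Lebesgue measure, and that the signed-distance convergence of boundaries with uniformly bounded $\mathcal{H}^1$-measure upgrades to $L^1$-convergence of the subgraphs; once these are in place, the direct method is essentially automatic.
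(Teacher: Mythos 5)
Your proposal is correct and follows essentially the same direct-method skeleton as the paper: compactness via Theorem~\ref{mlthm:compactnesslayers}, lower semicontinuity via Theorem~\ref{mlthm:lowersemicontinuitylayer}, preservation of the volume constraint along the modified sequence, and a penalty-term bound on $\mathcal{L}^2(S_{h^1_k,K^1_k})$ to reduce the unconstrained problem to the same scheme. The only cosmetic difference is that where you argue directly that adding an $\mathcal{H}^1$-rectifiable (hence $\mathcal{L}^2$-null) subset of $\partial\widetilde S_n$ to the cracks preserves Lebesgue measure, the paper simply cites \cite[Theorem~4.3]{LlP} for the identity $\mathcal{L}^2(S_{h^i_{k_l},K^i_{k_l}})=\mathcal{L}^2(S_{h^i_{k_l},\widetilde K^i_l})$, and where you invoke ``standard consequence of Hausdorff convergence with uniformly bounded perimeter'' for the $L^1$-convergence of characteristic functions, the paper pins this down with an explicit reference to \cite[Lemma~3.2]{KP}; filling in those two citations would make your argument coincide with the paper's.
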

	
\begin{proof}
		We follow the \emph{Direct Method of the Calculus of Variations}. Fix $\textbf{m}:=(m_1,m_0) \in \N^2$. Let $\{ (S_{h^1_k,K^1_k}, S_{h^0_k,K^0_k}, u_k)\} \subset \mathcal{C}^1_\mathbf{m}$ be a minimizing sequence of $\mathcal{F}^1$ such that $\mathcal{L}^2({S_{h^i_k,K^i_k}})= \mathbbm{v}_i$ for $i =0,1$, and
		$$
  \sup_{k\in\mathbb{N}} {\mathcal{F}^1 (S_{h^1_k,K^1_k}, S_{h^0_k,K^0_k}, u_k)}< \infty.
  $$
Since $\mathcal{L}^2({S_{h^1_k,K^1_k}})= \mathbbm{v}_1$ for every $k \in \N$,	by Theorem \ref{mlthm:compactnesslayers}, 
 there exist a subsequence $\{(S_{h^1_{k_l},K^1_{k_l}}, S_{h^0_{k_l},K^0_{k_l}} , u_{k_l})\}$, a sequence $\{(S_{h^1_{k_l}, \wt K^1_l}, S_{h^0_{k_l},\wt K^0_l} , v_l)\}_{n \in \N} \subset \mathcal{C}^1_\mathbf{m}$ and $(S_{h^1,K^1}, S_{h^0,K^0} ,u)  \in \mathcal{C}^1_\mathbf{m}$ such that 
 $$
 (S_{h^1_{k_l}, \wt K^1_l}, S_{h^0_{k_l},\wt K^0_l} , v_l) \xrightarrow{\tau_{\mathcal{C}^1}} (S_{h^1,K^1}, S_{h^0,K^0} ,u)$$
 as $l \to \infty$ and
		\begin{equation}
			\label{mleq:lexistence1}
			\liminf_{l\to \infty}{\mathcal{F}^1(S_{h^1_{k_l}, \wt K^1_l}, S_{h^0_{k_l},\wt K^0_l} , v_l)} = \liminf_{l \to \infty}{\mathcal{F}^1(S_{h^1_{k_l},K^1_{k_l}}, S_{h^0_{k_l},K^0_{k_l}} , u_{k_l})}.
		\end{equation} 
		According to Theorem \ref{mlthm:lowersemicontinuitylayer}, we have that
		\begin{equation}
			\label{mleq:lexistance2}
			\mathcal{F}^1(S_{h^1,K^1}, S_{h^0,K^0} ,u) \le \liminf_{l\to \infty}{\mathcal{F}^1(S_{h^1_{k_l}, \wt K^1_l}, S_{h^0_{k_l},\wt K^0_l} , v_l)}.
		\end{equation}
		We claim that $\{(S_{h^1,K^1}, S_{h^0,K^0} )\}$ and $(S_{h^1_{k_l}, \wt K^1_l}, S_{h^0_{k_l},\wt K^0_l} )$ satisfy the volume constraints of \eqref{mleq:fconst}. Indeed, fix $i =0,1$, by \cite[Theorem 4.3]{LlP}, 
  for any $l  \ge 1$, $\mathbbm{v}_i = \mathcal{L}^2({S_{h^i_{k_l}, K^i_{k_l}}}) = \mathcal{L}^2({S_{h^i_{k_l}, \wt K^i_{l}}})$. 
  Thanks to the fact that
$$
 (S_{h^1_{k_l}, \wt K^1_l}, S_{h^0_{k_l},\wt K^0_l} ) \xrightarrow{\tau_{\mathcal{B}^1}} (S_{h^1,K^1}, S_{h^0,K^0}),$$
applying \cite[Lemma 3.2]{KP} we infer that $S_{h^i_{k_l}, \wt K^i_{l}} \to S_{h^i,K^i}$ in $L^1(\R^2)$ as $l \to \infty$, and thus $\mathcal{L}^2({S_{h^i,K^i}}) = \mathbbm{v}_i$. From \eqref{mleq:lexistence1} and \eqref{mleq:lexistance2}, we deduce that
		\begin{equation*}
			\begin{split}
				&\inf_{(S_{h^1,K^1}, S_{h^0,K^0} ,u) \in \mathcal{C}^1_\mathbf{m},\, \mathcal{L}^2 ({S_{h^1, K^1}})= \mathbbm{v}_1,\, \mathcal{L}^2({S_{h^0,K^0}})= \mathbbm{v}_0}{\mathcal{F}^1(S_{h^1,K^1}, S_{h^0,K^0} ,u)}\\
    &\quad = \lim_{k \to \infty}\mathcal{F}^1 (S_{h^1_k,K^1_k}, S_{h^0_k,K^0_k}, u_k) \ge \liminf_{l\to \infty}{\mathcal{F}^1(S_{h^1_{k_l}, \wt K^1_l}, S_{h^0_{k_l},\wt K^0_l} , v_l)} \\
    & \quad \ge \mathcal{F}^1(S_{h^1,K^1}, S_{h^0,K^0} ,u).
			\end{split}
		\end{equation*}
		We conclude from the previous inequality that $(A,h,K,u)$ is a minimum of \eqref{mleq:fconst}. The same arguments are used to solve the unconstrained problem \eqref{mleq:funcost} by noticing that for a minimizing sequence $\{ (S_{h^1_k,K^1_k}, S_{h^0_k,K^0_k}, u_k)\} \subset \mathcal{C}^1_\mathbf{m}$ of $\mathcal{F}^{1,\bm{\lambda}}$ such that $$
  \sup_{k\in\mathbb{N}} {\mathcal{F}^{1,\bm{\lambda}} (S_{h^1_k,K^1_k}, S_{h^0_k,K^0_k}, u_k)}< \infty
  $$
  we have that 
  $$
    \mathcal{L}^2({S_{h^1_k,K^1_k}}) \le \abs{\mathcal{L}^2({S_{h^1_k,K^1_k}}) - \mathbbm{v}_1}  + \mathbbm{v}_1 \le \frac1{\lambda_1}  \mathcal{F}^{1,\bm{\lambda}} (S_{h^1_k,K^1_k}, S_{h^0_k,K^0_k}, u_k) + \mathbbm{v}_1,
  $$
  and thus $\sup_{k \in \N} \mathcal{L}^2({S_{h^1_k,K^1_k}})  < \infty$.
	\end{proof}

 \section{Multilayered films}
 \label{multisection}
In this section, we consider $\alpha>1$ and  we denote $\textbf{m}:= (m_0, \ldots,m_\alpha) \in \N^{\alpha+1}$. The main goal of this section is to prove Theorem \ref{mlthm:existence}. In order to do this, first we prove that $\C^\alpha_{{\bm{m}}}$ is compact and by induction, with respect to $\alpha \in \N$ we show that $\mathcal{F}^\alpha$ is lower semicontinuous with respect to the topology of $\tau_{\C^\alpha}$. Notice that in the previous section, we proved the basis of the induction for the lower semicontinuity property. 
We start by proving that $\B^\alpha_{{\bm{m}}}$ and $\C^\alpha_{\bm{m}}$ are compact.

 \begin{proposition}
\label{mlproo:compactness}
Let $\{(S_{h^\alpha_k,K^\alpha_k}, \ldots ,S_{h^0_k,K^0_k}) \} \subset \mathcal{B}^\alpha_{\mathbf{m}}$ such that 
		\begin{equation}
        \label{mllem:assumpition1}
		   \sup_{k \in \N}{\left(\mathcal{S}^\alpha_{ \sigma}(S_{h^\alpha_k,K^\alpha_k}, \ldots ,S_{h^0_k,K^0_k}) { + \mathcal{L}^2 (S_{h^\alpha_k,K^\alpha_k})}\right)} < \infty 
		\end{equation}
	Then, there exist a not relabeled subsequence $\{(S_{h^\alpha_k,K^\alpha_k}, \ldots ,S_{h^0_k,K^0_k})\} \subset \B^\alpha_{\mathbf{m}}$ and $\{(S_{h^\alpha, K^\alpha}, \ldots, S_{h^0,K^0} )\} \in \mathcal{B}^\alpha_{\mathbf{m}}$ such that $(S_{h^\alpha_k,K^\alpha_k}, \ldots ,S_{h^0_k,K^0_k})\xrightarrow{\tau_{\B^\alpha}} (S_{h^\alpha,K^\alpha}, \ldots ,S_{h^0,K^0})$.
 \end{proposition}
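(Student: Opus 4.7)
The plan is to derive Proposition \ref{mlproo:compactness} from the two-layer compactness statement Proposition \ref{mlcor:compactness} by extracting subsequences pair by pair. The first step is to verify that each consecutive pair $(S_{h^j_k, K^j_k}, S_{h^{j-1}_k, K^{j-1}_k})$ satisfies the hypotheses of Proposition \ref{mlcor:compactness}. By Definition \ref{def:multilayers} each such pair lies in $\mathcal{B}^1_{(m_j, m_{j-1})}$, and the uniform bound on the pairwise surface energy follows from \eqref{mllem:assumpition1}: in the sequential case $\sigma = s$ the energy $\mathcal{S}^\alpha_s$ is by definition the sum $\sum_{j=1}^\alpha \mathcal{S}^{(j-1,j)}$, while in the total case $\sigma = t$ each $\mathcal{S}^{(j-1,j)} \le \mathcal{S}^\alpha_t$ since (H1) ensures non-negativity of every pairwise surface contribution. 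The bound on $\mathcal{L}^2(S_{h^j_k, K^j_k})$ for every $j$ follows from the inclusion $S_{h^j_k, K^j_k} \subset S_{h^\alpha_k, K^\alpha_k}$ built into the admissibility of $\mathcal{B}^\alpha$.

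With these pairwise bounds in place, I would apply Proposition \ref{mlcor:compactness} iteratively. A first extraction produces a subsequence along which $(S_{h^1_k, K^1_k}, S_{h^0_k, K^0_k}) \xrightarrow{\tau_{\mathcal{B}^1}} (S_{h^1, K^1}, S_{h^0, K^0})$ for some pair in $\mathcal{B}^1_{(m_1, m_0)}$; a second extraction (from the first subsequence) applied to the pair of indices $(1,2)$ yields $\tau_{\mathcal{B}^1}$-convergence of $(S_{h^2_k, K^2_k}, S_{h^1_k, K^1_k})$ to a pair whose second component coincides with the previously extracted $S_{h^1, K^1}$ by uniqueness of signed-distance limits. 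Iterating $\alpha$ times (no diagonal argument is required, since $\alpha$ is finite) produces a single subsequence along which $\sdist(\cdot, \partial S_{h^j_k, K^j_k})$ converges locally uniformly in $\R^2$ to $\sdist(\cdot, \partial S_{h^j, K^j})$ for every $j = 0, \dots, \alpha$, with the uniform bound on $\mathcal{H}^1(\partial S_{h^j_k, K^j_k})$ inherited from each two-layer step.

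The remaining task is to verify that the candidate limit lies in $\mathcal{B}^\alpha_{\mathbf{m}}$, that is, to check the constraints of Definition \ref{def:multilayers}: $h^{j-1} \le h^j$, $\partial S_{h^j, K^j} \cap \Int(S_{h^{j-1}, K^{j-1}}) = \emptyset$, and the bounds on the numbers of connected components of each $\partial S_{h^j, K^j}$. Each of these is the assertion of pairwise membership in $\mathcal{B}^1_{(m_j, m_{j-1})}$, and is therefore an immediate consequence of the iterative application of Proposition \ref{mlcor:compactness} performed in the previous step. The main technical point I expect to require care is ensuring that the height ordering and the crack containment $K^j \cap \Int(S_{h^{j-1}}) \subset K^{j-1}$ persist in the limit across non-consecutive indices; this should follow from the limsup-type characterization of each $h^j$ inherited from \cite[Lemma 3.8]{LlP} (as already exploited in the proof of Proposition \ref{mlcor:compactness}), which manifestly preserves the pointwise inequality $h^{j-1}_k \le h^j_k$ under locally uniform signed-distance convergence, together with the closedness of the cracks $K^j$ and the fact that the same containment holds along the prelimit sequence.
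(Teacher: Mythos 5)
Your proposal is correct and follows essentially the same strategy as the paper: the paper carries out an induction on $\alpha$, handling the lower $n$ layers by the inductive hypothesis and the top pair by Proposition~\ref{mlcor:compactness}, glued by uniqueness of signed-distance limits, which is conceptually identical to your iterative pair-by-pair extraction (for finite $\alpha$, iterated extraction and induction are the same argument). The paper additionally re-verifies $\partial S_{h^{n+1},K^{n+1}} \cap \Int(S_{h^n,K^n}) = \emptyset$ directly via signed-distance convergence, but as you correctly observe this is already delivered by Proposition~\ref{mlcor:compactness}, whose conclusion places the limit pair in $\mathcal{B}^1_{\mathbf{m}}$.
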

 \begin{proof}
     We proceed by induction on $\alpha \in \N$. If $\alpha = 1$ by Proposition \ref{mlcor:compactness} the assertion holds. We now prove that if the assertion holds for $\alpha = n$, then the assertion also holds for      
     $\alpha = n+1$. Thus, let us assume that the assertion holds for $\alpha = n$. 
     From the definition of $ \mathcal{S}^{n+1}_{\sigma}$ and the non-negativeness of  $\mathcal{S}^{(n+1,j)}$ for every $j=0, \ldots,n$ it follows that
     \begin{equation}
         \label{mleq:inductioncompactnessS1}
         \begin{split}
             \mathcal{S}^{n+1}_{ \sigma} (S_{h^{n+1}_k,K^{n+1}_k}, \ldots, S_{h^0_k,K^0_k} )& \geq \mathcal{S}^{n}_{ \sigma} (S_{h^{n}_k,K^{n}_k}, \ldots, S_{h^0_k,K^0_k} ) \\
             & \qquad +  \mathcal{S}^{(n+1,n)}(S_{h^{n+1}_k,K^{n+1}_k}, S_{h^n_k,K^n_k}) 
             \end{split}
     \end{equation}
     for every $k \in \N$.
     By \eqref{mllem:assumpition1}, \eqref{mleq:inductioncompactnessS1}, and the non-negativeness of $\mathcal{S}^{(n+1,n)}$  the induction hypothesis yields that, up to extracting a not relabeled subsequence,  there exists $(S_{h^{n},K^{n}}, \ldots, S_{h^0,K^0} ) \in \B^n_{\textbf{m}_\textbf{n}}$ such that $(S_{h^{n}_k,K^{n}_k}, \ldots, S_{h^0_k,K^0_k} )  \xrightarrow{\tau_{\B^n}} (S_{h^{n},K^{n}}, \ldots, S_{h^0,K^0} )$, where $\textbf{m}_\textbf{n} : = (m_0, \ldots, m_{n} )$. 
      Furthermore, by  \eqref{mllem:assumpition1} and   \eqref{mleq:inductioncompactnessS1} and  the non-negativeness of $\mathcal{S}^{n}_{ \sigma}$, in view of Proposition \ref{mlcor:compactness} applied to $\mathcal{S}^{(n+1,n)}$ it follows  that,  up to extracting a not relabeled subsequence, there exist  a region $S_{h^{n+1},K^{n+1}}$ with $(h^{n+1},K^{n+1})\in \text{AHK}(\Om)$
     (and $(S_{h^{n+1},K^{n+1}}, S_{h^n, K^n}) \in \B^1_{\textbf{m}_1}$ for ${\textbf{m}_1} : = (m_n, m_{n+1})\in \N^2$)  such that $(S_{h^{n+1}_k,K^{n+1}_k}, S_{h^n_k, K^n_k})\xrightarrow{\tau_{\B^1}} (S_{h^{n+1},K^{n+1}}, S_{h^n, K^n})$, where we used the uniqueness of the sign-distance convergence.

It remains to prove that $(S_{h^{n+1},K^{n+1}}, \ldots, S_{h^0,K^0} ) \in  \B^{n+1}_{\textbf{m}} $. Since $\partial S_{h^i,K^i}$ has at most $m_i$ connected components for $i = 0 , \ldots, n+1$, it remains only to check that $\partial S_{h^{n+1},K^{n+1}} \cap \Int(S_{h^n,K^n}) = \emptyset$. Let us assume by contradiction that $\partial S_{h^{n+1},K^{n+1}} \cap \Int(S_{h^n,K^n}) \neq \emptyset$. Then, there exists
\begin{equation}
     \label{mleq:cptsB2}
x \in  \partial S_{h^{n+1},K^{n+1}} \cap \Int(S_{h^n,K^n}).
\end{equation}
By properties of the signed distance convergence (see \cite[Remark 3.8]{LlP}) there exists $x_k \in \partial
S_{h^{n+1}_k,K^{n+1}_k}$ such that $x_k \to x$, and,  since $(S_{h^{n+1}_k,K^{n+1}_k}, S_{h^n_k, K^n_k})\xrightarrow{\tau_{\B^1}} (S_{h^{n+1},K^{n+1}}, S_{h^n, K^n})$
it follows that
\begin{equation}
     \label{mleq:cptsB3}
        \sdist(x, \partial S_{h^n_k,K^n_k}) \to \sdist(x, \partial S_{h^n,K^n}) \quad \text{as } k \to \infty.
     \end{equation}
By \eqref{mleq:cptsB2} there exists $\ep >0$ such that $\sdist(x , \partial S_{h^n,K^n})= -\ep$, we can find $k_0 := k_0(x)$ for which $\sdist(x, \partial S_{h^n_{k_0},K^n_{k_0}})$ is negative. Then, $x \in \Int(S_{h^n_{k_0},K^n_{k_0}})$ and so, there exists $\delta \le \ep/2$ such that
$$
	x_{k_0} \in B_{\delta}(x) \subset \Int(S_{h^n_{k_0},K^n_{k_0}}),
$$
which is an absurd since $\partial S_{h^{n+1}_{k_0},K^{n+1}_{k_0}} \cap \Int(S_{h^n_{k_0},K^n_{k_0}})= \emptyset$. 
Finally, we conclude the proof by observing that there exists 
$(S_{h^{n+1},K^{n+1}}, \ldots, S_{h^0,K^0} ) \in \B^{n+1}_{\textbf{m}}$ such that $(S_{h^{n+1}_k,K^{n+1}_k}, \ldots, S_{h^0_k,K^0_k} )  \xrightarrow{\tau_{\B^{n+1}}}(S_{h^{n+1},K^{n+1}}, \ldots, S_{h^0,K^0} )$. 
    \end{proof}

Now, we prove that $\mathcal{C}^\alpha_{{\bm m}}$ is compact with respect to the topology $\tau_{\mathcal{C}^\alpha}$.

\begin{proof}[Proof of Theorem \ref{mlcompactness:Clambda}]
		Denote $R:= \sup_{k \in \N} \left({ \mathcal{F}^\alpha_{\sigma} (S_{h^\alpha_k,K^\alpha_k}, \ldots ,S_{h^0_k,K^0_k}, u_k)} { + \mathcal{L}^2 (S_{h^\alpha_k,K^\alpha_k}) } \right) $. 
		Without loss of generality (by passing, if necessary, to a not relabeled subsequence), we assume that
		\begin{equation}\label{mllim_bound}
			\liminf_{k \rightarrow \infty}{\mathcal{F}^\alpha_{\sigma} (S_{h^\alpha_k,K^\alpha_k}, \ldots ,S_{h^0_k,K^0_k}, u_k)} = \lim_{k \rightarrow \infty}{\mathcal{F}^\alpha_{ \sigma} (S_{h^\alpha_k,K^\alpha_k}, \ldots ,S_{h^0_k,K^0_k}, u_k) }\leq R.
		\end{equation}
Since  $\mathcal{W}$ is a non-negative energy, by Proposition \ref{mlproo:compactness} there exist a  subsequence $\{(S_{h^\alpha_k,K^\alpha_k}, \ldots ,S_{h^0_k,K^0_k})\} \subset \B^\alpha_{\mathbf{m}}$ and $(S_{h^\alpha,K^\alpha}, \ldots ,S_{h^0,K^0}) \in \mathcal{B}^\alpha_{\mathbf{m}}$ such that 
$$
(S_{h^\alpha_k,K^\alpha_k}, \ldots ,S_{h^0_k,K^0_k}) \xrightarrow{\tau_{\B^\alpha}} (S_{h^\alpha,K^\alpha}, \ldots ,S_{h^0,K^0}).
$$ 
The rest of the proof is devoted to the construction of a sequence $\{(S_{h^\alpha_\kn,\wt K^\alpha_n}, \ldots, S_{h^0_\kn,\wt K^0_n} , u_{k_n})\} \subset \B^\alpha_{\mathbf{m}}$ to which we can apply \cite[Corollary 3.8]{KP} (with $P=\Int{(S_{h^\alpha,K^\alpha})}$ and $P_n=\Int{(S_{h^\alpha_\kn,\wt K^\alpha_n})}$, respectively)  in order to obtain $u\in H^1_{{\rm loc}}(\Int(S_{h^\alpha,K^\alpha});\R^2)$ such that $(S_{h^\alpha,K^\alpha}, \ldots ,S_{h^0,K^0},u) \in \mathcal{C}^\alpha_{\mathbf{m}}$ has finite energy, and a sequence $\{b_n\}_{n\in \N}$ of piecewise rigid displacements such that 
$$
(S_{h^\alpha_\kn,\wt K^\alpha_n}, \ldots, S_{h^0_\kn,\wt K^0_n} , u_{k_n} + b_n) \xrightarrow{\tau_{\mathcal{C}^\alpha}} (S_{h^\alpha,K^\alpha}, \ldots ,S_{h^0,K^0},u).
$$
Furthermore, we observe that also Equation \eqref{mlliminfequalliminflayers} 
will be a consequence of such construction and hence, the assertion of the theorem will directly follow.

By \cite[Proposition 3.6]{KP} applied to $S_{h^\alpha_\kn,K^\alpha_\kn}$ and $S_{h^\alpha,K^\alpha}$ there exist a  not relabeled subsequence $\{S_{h^\alpha_\kn,K^\alpha_\kn}\}$ and a sequence $\{\widetilde{A}_n\}$ with $\Hs^1$-rectifiable boundary $\partial \widetilde{A}_n$ of at most $m_\alpha$-connected components such that  
\begin{equation}
    \label{mleq:boundpartialtildeA}
    \sup_{n\in \N} \Hs^1 (\partial \widetilde{A}_n) < \infty,
\end{equation} that satisfy the following  properties:
\begin{itemize}
	\item[(a1)] $\partial S_{h^\alpha_\kn,K^\alpha_\kn} \subset \partial \widetilde{A}_n$ and $\displaystyle \lim_{n \to \infty} \Hs^1 (\partial \widetilde{A}_n \setminus \partial S_{h^\alpha_\kn,K^\alpha_\kn}) = 0$,
	\item[(a2)]  $\sdist(\cdot, \partial \widetilde{A}_n) \to \sdist(\cdot, \partial S_{h^\alpha,K^\alpha})$ locally uniformly in $\R^2$ as $n \to \infty$,
	\item[(a3)] If $\{E_i\}_{i \in I}$ is the family of all connected components of ${\rm{Int}}(S_{h^\alpha,K^\alpha}),$ we can find the connected components of ${\rm{Int} }(\widetilde{A}_n)$, which we enumerate as $\{E_i^n\}_{i \in I}$, such that for any $i$ and $G \subset \subset E_i$ one has $G \subset \subset E_i^n$ for all $n$ large (depending only on $i$ and $G$),
	\item[(a4)] $\mathcal{L}^2 (\widetilde{A}_n) = \mathcal{L}^2(S_{h^\alpha_\kn,K^\alpha_\kn})$.
\end{itemize}
Furthermore, from the construction of $\widetilde{A}_n$ (namely from the fact that $\widetilde{A}_n$ is constructed by adding extra ``internal'' topological boundary to the selected subsequence $S_{h^\alpha_\kn,K^\alpha_\kn}$,  see \cite[Propositions 3.4 and 3.6]{KP}) it follows  that 
\begin{equation}\label{ml1fromproof3.6}
\widetilde{A}_n = S_{h^\alpha_\kn,K^\alpha_\kn} \setminus (\partial \widetilde{A}_n \setminus \partial S_{h^\alpha_\kn,K^\alpha_\kn})
\end{equation}
with $\partial \widetilde{A}_n \setminus \partial S_{h^\alpha_\kn,K^\alpha_\kn}$ given by a finite union of closed 
$\Hs^1$-rectifiable sets  
connected to $\partial S_{h^\alpha_\kn,K^\alpha_\kn}$. More precisely, there exist a finite index set $J$ and a family $\{\Gamma_j\}_{j\in J}$ of closed $\Hs^1$-rectifiable sets of $\Om$ connected to $\partial S_\kn$ such that
$$
   \partial \widetilde{A}_n \setminus \partial S_{h^\alpha_\kn,K^\alpha_\kn} = \bigcup_{j \in J} \Gamma_j. 
$$ 
We define 
$$ \widetilde{K}^i_n := K^i_{k_n} \cup ((\partial \widetilde{A}_n \setminus \partial S_{h^\alpha_\kn,K^\alpha_\kn}) \cap S_{h^i_\kn}) \subset S_{h^i_\kn},$$
for every $i = 0 , \ldots, \alpha$, and we observe that $\widetilde{K}^i_n$ is closed and $\Hs^1$-rectifiable in view of the fact that  $\partial \widetilde{A}_n \setminus \partial S_{h^\alpha_\kn,K^\alpha_\kn}$ is a closed set in $\Omega$ and is $\Hs^1$-rectifiable, since  $\partial \widetilde{A}_n$ is $\Hs^1$-rectifiable. Therefore, $(h^i_\kn, \widetilde{K}^i_n) \in \text{AHK}(\Om)$ for every $i =0,\ldots,\alpha$. Furthermore, we have that 
$$
S_{h^i_{k_n}, \widetilde{K}^i_n}\subset S_{h^i_{k_n}} \subset S_{h^j_{k_n}} =  \overline{S_{h^j_{k_n},\widetilde{K}^j_n }},
$$
for every $0\le i\le j \le \alpha$. 
We claim that $\partial S_{h^i_{k_n}, \widetilde{K}^i_n}$ has at most $m_i$-connected components for $i = 0 , \ldots, \lambda$. Indeed, let $i \in \{ 0 , \ldots, \alpha\}$, if for every $j \in J$, $S_{h^i_\kn,K^i_\kn} \cap \Gamma_j$ is empty there is nothing to prove, so we assume that there exists $j \in J$ such that $S_{h^i_\kn,K^i_\kn} \cap \Gamma_j \neq \emptyset$. On one hand if $\Gamma_j \subset S_{h^i_\kn,K^i_\kn}$, thanks to the facts that $\Gamma_j$ is connected to $\partial S_{h^\alpha_\kn,K^\alpha_\kn}$ and $S_{h^i_\kn,K^i_\kn} \subset S_{h^\lambda_\kn}$, we 
deduce that $\Gamma_j$ needs to be connected to $\partial S_{h^\alpha_\kn,K^\alpha_\kn}$. On the other hand, if $\Gamma_j \cap ( S_{h^\alpha_\kn,K^\alpha_\kn} \setminus S_{h^i_\kn}) \neq \emptyset$, then we can find $x_1 \in \Gamma_j \cap S_{h^i_\kn,K^i_\kn} $ and $x_2  \in \Gamma_j \cap (S_{h^\alpha_\kn,K^\alpha_\kn} \setminus S_{h_\kn})$. Since $\Gamma_j$ is closed and 
connected, by \cite[Lemma 3.12]{F} there exists a parametrization $r:[0,1]\to\mathbb{R}^2$ whose support $\gamma \subset \Gamma_j$ joins the point $x_1$ with $x_2$. Thus, $\gamma$ crosses $\partial S_{h^i_\kn,K^i_\kn}$ and we conclude that $\Gamma_j$ is connected to $\partial S_{h^i_\kn,K^i_\kn}$. Finally, by repeating the same arguments of \eqref{mleq:equivalencetildeK}, we obtain that
$$
    \wt A_n =  S_{h^\alpha_\kn,\wt K^\alpha_\kn}
$$ 
and thus, $(S_{h^\alpha_\kn,\wt K^\alpha_n}, \ldots, S_{h^0_\kn,\wt K^0_n} ) \in \B^\alpha_{\textbf{m}}$.

We claim that $(S_{h^\alpha_\kn,\wt K^\alpha_n}, \ldots, S_{h^0_\kn,\wt K^0_n} ) \xrightarrow{\tau_{\B^\alpha}} (S_{h^\alpha,K^\alpha}, \ldots ,S_{h^0,K^0})$ as $n \to \infty$. In view of \eqref{mleq:boundpartialtildeA}, (a2), by \eqref{mleq:uniongraphs} and the previous construction of $\widetilde{K}^i_n$, 
$$
\sup_{n \in \N} \Hs^1(\partial S_{h^i_\kn, \widetilde{K}^i_n}) < \infty,
$$
for every $i = 0, \ldots, \lambda$. 
It remains to prove that 
\begin{equation}\label{mlclaim_convergence}
\sdist(\cdot, \partial S_{h_\kn^i, \widetilde{K}_n^i}) \to \sdist(\cdot, \partial S_{h^i,K^i})
\end{equation} 
locally uniformly in $\R^2$ as $n \to \infty$ for every $i = 0, \ldots, \alpha$. Let us fix $i = 0, \ldots, \alpha$,  by properties of the signed distance convergence, 
it suffices to prove that $S_{h^i_\kn, \widetilde{K}^i_n} \cngK S_{h^i}$ and that $\Om \setminus S_{h^i_\kn, \widetilde{K}^i_n} \cngK \Om \setminus \Int(S_{h^i,K^i})$. 
On one hand, by the $\tau_{\B^\alpha}$-convergence of $\{(S_{h^\alpha_\kn,\wt K^\alpha_n}, \ldots, S_{h^0_\kn,\wt K^0_n} ) \}$, the fact that $\overline{S_{h^i_\kn, \widetilde{K}^i_n}} = S_{h^i_\kn}$, and the properties of Kuratowski convergence, it follows that $S_{h^i_\kn, \widetilde{K}^i_n} \cngK S_{h^i}$. On the other hand, let $x \in \Om \setminus \Int(S_{h^i,K^i})$, since 
$$
    \Int(S_{h^i_\kn, \widetilde{K}^i_n} ) = \Int(S_{h^i_\kn}) \setminus\widetilde{K}^i_n \subset \Int(S_{h^i_\kn}) \setminus K^i_\kn = \Int(S_{h^i_\kn, K_\kn} )
$$
and by the fact that $\Om\setminus \Int(S_{h^i_\kn, K^i_\kn}) \cngK \Om \setminus \Int(S_{h^i,K^i})$, there exists 
$$x_n \in \Om \setminus \Int(S_{h^i_\kn,K^i_\kn}) \subset \Om \setminus \Int(S_{h^i_\kn,\widetilde{K}^i_n}) $$
such that $x_n \to x$. Now, we consider a sequence  $x_n \in \Om \setminus \Int(S_{h^i_\kn,\widetilde{K}^i_n})$ converging to a point $x\in\Omega$. 
We proceed by contradiction, namely we assume that $x \in \Int(S_{h^i,K^i})$. Therefore, there exists $\epsilon >0$ such that $\sdist(x, \partial S_{h^i,K^i}) = - \epsilon$, which implies that $\sdist(x, \partial S_{h^i_\kn,K^i_\kn}) \to -\epsilon$ as $n \to \infty$. Thus, there exists $n_\epsilon \in \N$, such that $x_n \in B_{\epsilon/2}(x) \subset \Int(S_{h^i_\kn,K^i_\kn})$, for every $n \ge n_\epsilon$. However, notice that
\begin{equation}
    \label{mleq:compactnessinteriorS}
    \begin{split}
        x_n \in \Om \setminus \Int\left(S_{h^i_\kn, \widetilde{K}^i_n}\right) &= \Om \setminus \left( \Int({S_{h^i_\kn}}) \setminus \widetilde{K}^i_n \right) \\
        &= \left( \Om \setminus \Int\left(S_{h^i_\kn,K^i_\kn}\right)\right) \cup  \left( \left( \partial S_{h^\alpha_\kn,\wt K^\alpha_\kn} \setminus \partial S_{h^\alpha_\kn, K^\alpha_\kn} \right) \cap S_{h^i_\kn} \right),
    \end{split}
\end{equation}
where in the last equality we used the definition of $\widetilde{K}^i_n := K^i_\kn \cup ( ( \partial S_{h^\alpha_\kn,\wt K^\alpha_\kn} \setminus \partial S_{h^\alpha_\kn, K^\alpha_\kn} ) \cap S_{h^i_\kn} )$ and the fact that $\Int(S_{h^i_\kn,K^i_\kn}) = \Int(S_{h^i_\kn}) \setminus K^i_\kn$. Therefore, by \eqref{mleq:compactnessinteriorS} we deduce that $x_n \in  \partial S_{h^\alpha_\kn,\wt K^\alpha_\kn} \setminus \partial S_{h^\alpha_\kn, K^\alpha_\kn}$ for every $n \ge n_\epsilon$ and hence, $x \in\partial S_{h^\alpha, K^\alpha}$ by (a2) and by \cite[Remark 3.7]{LlP}. 
We reached an absurd as it follows that $x\in \Int (S_{h^i,K^i})\cap\partial S_{h^\alpha, K^\alpha}=\emptyset$. This concludes the proof of \eqref{mlclaim_convergence} and hence, of the claim.

By \eqref{mleq:H1} and by conditions (a1), (a4) and \eqref{ml1fromproof3.6}, we observe that
\begin{equation}
	\label{mleq:compactnessIntS1}
	\lim_{n \to \infty} {\abs{\mathcal{S}^\alpha_{
 \sigma}(S_{h^\alpha_\kn,K^\alpha_\kn}, \ldots ,S_{h^0_\kn,K^0_\kn}) - \mathcal{S}^\alpha_{
 \sigma}  (S_{h^\alpha_\kn,\wt K^\alpha_n}, \ldots, S_{h^0_\kn,\wt K^0_n} ) }} = 0,
\end{equation}
and
\begin{equation}
	\label{mleq:compactnessIntW}
	\mathcal{W}(S_{h^\alpha_\kn,K^\alpha_\kn}, \ldots ,S_{h^0_\kn,K^0_\kn}, u_\kn) = \mathcal{W}(S_{h^\alpha_\kn,\wt K^\alpha_n}, \ldots, S_{h^0_\kn,\wt K^0_n},u_\kn). 
\end{equation}
By \eqref{mleq:H3}, \eqref{mllim_bound},  \eqref{ml1fromproof3.6}, \eqref{mleq:compactnessIntW}, (a3) and thanks to the fact that $\mathcal{S}^\alpha_{\sigma}$ is non-negative, we obtain that
		\begin{equation*}		\int_{E_i^n}{\abs{e(u_\kn)}^2 dx} \le \int_{S_{h^\alpha_\kn, \wt K^\alpha_\kn}}{\abs{e(u_\kn)}^2 dx} \le C\frac{R}{2c_3},
		\end{equation*}
for every $i\in I$, for $n$ large enough and for a constant $C:= C(u_0^1, \ldots, u_0^\alpha) >0$. Therefore, by a diagonal argument and by \cite[Corollary 3.8]{KP} (applied to, with the notation of \cite{KP}, $ P = E_i$ and $P_n = E_i^n$) up to extracting not relabelled subsequences both for  $\{u_{\kn}\} \subset H^1_{{\rm loc}}(\Om; \R^2)$ and $\{E_i^{n}\}_n $ there exist $w_i \in H^1_{\rm{loc}} (E_i, \R^2)$,  and a sequence of rigid displacements $\{b_n^i\}$ such that $(u_\kn + b_n^i) \mathbbm{1}_{E^{n}_i} \rightarrow w_i$ a.e. in $E_i$. Let $\{D^n_i\}_{i \in \widetilde{I}}$ for an index set $\widetilde{I}$ be the family of open and connected components of $S_{h^\alpha_\kn, \wt K^\alpha_\kn} \setminus \bigcup_{i\in I}{E_i^n}$ such that by (a3) $\Int(D^n_i)$ converges to the empty set for every $i \in \widetilde{I}$. In $D_i^n$ we consider the null rigid displacement, and we define
		\begin{equation*}
           {b}_n := \sum_{i \in I}{b_n^i} \mathbbm{1}_{E_i^n}  \quad \text{and} \quad u := \sum_{i \in I}{w_i \mathbbm{1}_{E_i}}.
		\end{equation*}
	We have that $u \in H^1_{\rm{loc}}({\rm{Int}} (S_{h^\alpha, K^\alpha}); \R^2 )$, ${b}_n$ is a rigid displacement associated to $S_{h^\alpha_\kn, \wt K^\alpha_\kn}$,  $u_\kn + {b}_n \rightarrow u$ a.e. in ${\rm{Int}}(S_{h^\alpha, K^\alpha})$ and hence,  $ (S_{h^\alpha,K^\alpha}, \ldots ,S_{h^0,K^0}, u) \in \mathcal{C}^\alpha_{\mathbf{m}}$ and $(S_{h^\alpha_\kn,\wt K^\alpha_n}, \ldots, S_{h^0_\kn,\wt K^0_n} ,  u_\kn + {b}_n) \xrightarrow{\tau_{\mathcal{C}}} (S_{h^\alpha,K^\alpha}, \ldots ,S_{h^0,K^0}, u)$.
	Furthermore, as $E(u_\kn + b_n) = Eu_\kn$, from 
	\eqref{mleq:compactnessIntS1} and \eqref{mleq:compactnessIntW} it follows that
\begin{equation}
	\label{mleq:compactnessIntS3}
	\lim_{n \to \infty} {\abs{\mathcal{F}^\alpha_{ \sigma} (S_{h^\alpha_\kn,K^\alpha_\kn}, \ldots, S_{h^0_\kn,K^0_\kn}  ,u_\kn) - \mathcal{F}^\alpha_{ \sigma} (S_{h^\alpha_\kn,\wt K^\alpha_n}, \ldots, S_{h^0_\kn,\wt K^0_n} , u_\kn + b_n) }} = 0, 
\end{equation}
	 which implies \eqref{mlliminfequalliminflayers} and completes the proof.
	\end{proof}
  
In the following proof, we show by induction that $\mathcal{F}^\alpha_{\sigma}$ is lower semicontinuous.
	
\begin{proof}[Proof of Theorem \ref{mlthm:lowersemicontinuitymultilayer}]
Since 
$$
 \mathcal{F}^\alpha_{ \sigma} (S_{h^\alpha_k,K^\alpha_k}, \ldots, S_{h^0_k,K^0_k} , u_k): = \mathcal{S}^\alpha_{ \sigma} (S_{h^\alpha_k,K^\alpha_k}, \ldots, S_{h^0_k,K^0_k}) + \mathcal{W}( S_{h^\alpha_k,K^\alpha_k}, \ldots, S_{h^0_k,K^0_k}, u_k),
$$
and by non-negativeness of $\mathcal{S}^\alpha_{\sigma}$ and $\mathcal{W}$ we prove first that $\mathcal{S}^\alpha_{ \sigma}$ is lower semicontinuous with respect to the convergence in $\tau_{\B^\alpha}$, and then we prove that $\mathcal{W}$ is lower semicontinuous with respect to the convergence in $\tau_{\mathcal{C}^\alpha}$.

First, we assume that $\sigma = t$. To prove that $\mathcal{S}^\alpha_{t}$ is lower semicontinuous we proceed by induction on $\alpha \in \N$. Notice that if $\alpha = 1$, by Theorem \ref{mlthm:lowersemicontinuitylayer} the assertion holds. Assume now that for $\alpha = n$ the assertion of the theorem holds. We are going to prove that the assertion of the theorem holds if $\alpha = n+1$.  By definition of the energy $\mathcal{S}^{n+1}$ we see that
\begin{equation}
         \label{mleq:inductionlwerS1}
         \begin{split}
             \mathcal{S}^{n+1}_{t} (S_{h^{n+1}_k,K^{n+1}_k}, \ldots, S_{h^0_k,K^0_k} )&=  \mathcal{S}^{n}_{t} (S_{h^{n}_k,K^{n}_k}, \ldots,S_{h^0_k,K^0_k} ) \\ 
             & \quad + \sum_{j=0}^{n} \mathcal{S}^{(n+1,j)}(S_{h^{n+1}_k,K^{n+1}_k}, S_{h^j_k,K^j_k} )
         \end{split}
     \end{equation}
for every $k \in \N$. By the induction hypothesis and the fact that 
$$
  (S_{h^{n}_k,K^{n}_k}, \ldots,S_{h^0_k,K^0_k}) \xrightarrow{\tau_{\mathcal{B}^n}} (S_{h^{n},K^{n}}, \ldots,S_{h^0,K^0})
$$
we have that
\begin{equation}
    \label{mleq:inductionlowerS2}
    \mathcal{S}^{n}_{t} (S_{h^{n},K^{n}}, \ldots,S_{h^0,K^0}) \le \liminf_{k \to \infty} \mathcal{S}^{n}_{ t}(S_{h^{n}_k,K^{n}_k}, \ldots,S_{h^0_k,K^0_k}).
\end{equation}
Furthermore, by the definition of $\tau_{\B^{n+1}}$-convergence it follows that
\begin{equation}
    \label{mleq:inductionlowerS3}
    (S_{h^{n+1}_k,K^{n+1}_k}, S_{h^j_k,K^j_k} ) \xrightarrow{\tau_{\mathcal{B}^1}}  (S_{h^{n+1},K^{n+1}}, S_{h^j_k,K^j_k} )
\end{equation}
for every $j = 0 , \ldots, n$. Thus, by \eqref{mleq:inductionlowerS3} and by the basis of the induction (see the proof Theorem \ref{mlthm:lowersemicontinuitylayer}) we deduce that
\begin{equation}
    \label{mleq:inductionlowerS4}
    \mathcal{S}^{(n+1,j)}(S_{h^{n+1},K^{n+1}}, S_{h^j_k,K^j_k} ) \le \liminf_{k \to \infty} \mathcal{S}^{(n+1,j)}(S_{h^{n+1}_k,K^{n+1}_k}, S_{h^j_k,K^j_k} )
\end{equation}
for every $j = 0 , \ldots, n$. 
It follows from the superadditivity of the \emph{liminf} that
\begin{equation}
    \label{mleq:inductionlowerS5} 
    \begin{split}
        \sum_{j=0}^{n}  \mathcal{S}^{(n+1,j)}(S_{h^{n+1},K^{n+1}}, S_{h^j_k,K^j_k} ) &\le \sum_{j=0}^{n} \liminf_{k \to \infty}  \mathcal{S}^{(n+1,j)}(S_{h^{n+1}_k,K^{n+1}_k}, S_{h^j_k,K^j_k} )\\
        & \le \liminf_{k \to \infty}  \sum_{j=0}^{n}  \mathcal{S}^{(n+1,j)}(S_{h^{n+1}_k,K^{n+1}_k}, S_{h^j_k,K^j_k} ),
    \end{split}
\end{equation}
where in the first inequality we used \eqref{mleq:inductionlowerS4}. Therefore, we have that
\begin{equation}
    \label{mleq:inductionlowerS6}
    \begin{split}
              \mathcal{S}^{n+1}_{t} (S_{h^{n+1},K^{n+1}}, \ldots, S_{h^0,K^0} )&= \mathcal{S}^{n}_{ t} (S_{h^{n},K^{n}}, \ldots,S_{h^0,K^0} ) \\ 
             & \quad + \sum_{j=0}^{n} \mathcal{S}^{(n+1,j)}(S_{h^{n+1},K^{n+1}}, S_{h^j,K^j} )
 \\
             & \le  \liminf_{k \to \infty} \mathcal{S}^{n}_{ t}(S_{h^{n}_k,K^{n}_k}, \ldots,S_{h^0_k,K^0_k}) \\
             & \quad + \liminf_{k \to \infty}  \sum_{j=0}^{n}  \mathcal{S}^{(n+1,j)}(S_{h^{n+1}_k,K^{n+1}_k}, S_{h^j_k,K^j_k} ) \\
             & \le  \liminf_{k \to \infty} \left(   \mathcal{S}^{n}_{t}(S_{h^{n}_k,K^{n}_k}, \ldots,S_{h^0_k,K^0_k}) \right. \\
             & \quad \left.+  \sum_{j=0}^{n}  \mathcal{S}^{(n+1,j)}(S_{h^{n+1}_k,K^{n+1}_k}, S_{h^j_k,K^j_k} ) \right) \\
             &=  \liminf_{k \to \infty} \mathcal{S}^{n+1}_{t} (S_{h^{n+1}_k,K^{n+1}_k}, \ldots, S_{h^0_k,K^0_k} ),
    \end{split}
\end{equation}
where in the first and the second equality we used \eqref{mleq:inductionlwerS1}, in the first inequality we used \eqref{mleq:inductionlowerS2} and \eqref{mleq:inductionlowerS5}, and in the second inequality we used the superadditivity of the \emph{liminf}, and thus, $\mathcal{S}^\alpha$ is lower semicontinuous with respect to the topology $\tau_{\mathcal{B}^\alpha}$. 

For the case with $\sigma = s$, we proceed also by induction and, on the basis of  the induction hypothesis with $\sigma = s$,  in order to obtain the lower semicontinuous property for $\mathcal{S}^{n+1}_{s}$, we proceed in the same way, with the only difference that the subscript $t$ is replaced by $s$, instead of \eqref{mleq:inductionlwerS1} we use
\begin{equation*}
         \begin{split}
             \mathcal{S}^{n+1}_{s} (S_{h^{n+1}_k,K^{n+1}_k}, \ldots, S_{h^0_k,K^0_k} )&=  \mathcal{S}^{n}_{s} (S_{h^{n}_k,K^{n}_k}, \ldots,S_{h^0_k,K^0_k} ) \\ 
             & \quad +  \mathcal{S}^{(n,n+1)}(S_{h^{n+1}_k,K^{n+1}_k}, S_{h^n_k,K^n_k} ),
         \end{split}
     \end{equation*}
and that, because of the definition of $\mathcal{S}^{n+1}_{s}$, it is enough to use \eqref{mleq:inductionlowerS4}  only with $j=n$.

With respect to the elastic energy it is enough to repeat the same arguments of the proof of lower semicontinuity of $\mathcal{W}$ in the proof of Theorem \ref{mlthm:lowersemicontinuitylayer}, from which we obtain that
\begin{equation}
    \label{mleq:lowersemiW}
    \mathcal{W}(S_{h^\alpha_k,K^\alpha_k}, \ldots, S_{h^0_k,K^0_k}, u) \le \liminf_{k \to \infty} \mathcal{W}(S_{h^\alpha_k,K^\alpha_k}, \ldots, S_{h^0_k,K^0_k}, u_k).  
\end{equation}

Finally, we conclude the proof by observing that by the superadditivity of the \emph{liminf} it follows that
\begin{equation*}
    \begin{split}
        \mathcal{F}^ \alpha_{\sigma}(S_{h^\alpha,K^\alpha}, \ldots ,S_{h^0,K^0}, u)  & := \mathcal{S}^\alpha_{\sigma}(S_{h^\alpha,K^\alpha}, \ldots ,S_{h^0,K^0}) +  \mathcal{W}(S_{h^\alpha_k,K^\alpha_k}, \ldots, S_{h^0_k,K^0_k}, u) \\
        & \le \liminf_{k \to \infty} \mathcal{S}^\alpha_{\sigma}(S_{h^\alpha_k,K^\alpha_k}, \ldots ,S_{h^0_k,K^0_k}) \\
        & \quad \quad + \liminf_{k \to \infty} \mathcal{W}(S_{h^\alpha_k,K^\alpha_k}, \ldots, S_{h^0_k,K^0_k}, u_k)  \\
        & \le \liminf_{k \to \infty} \left(  \mathcal{S}^\alpha_{\sigma}(S_{h^\alpha_k,K^\alpha_k}, \ldots ,S_{h^0_k,K^0_k}) + \mathcal{W}(S_{h^\alpha_k,K^\alpha_k}, \ldots, S_{h^0_k,K^0_k}, u_k)  \right) \\
        &=: \liminf_{k \to \infty} \mathcal{F}^\alpha_{\sigma} (S_{h^\alpha_k,K^\alpha_k}, \ldots ,S_{h^0_k,K^0_k}, u_k),
    \end{split}
\end{equation*}
where in the first inequality we used the lower semicontinuity of $\mathcal{S}^\alpha$ and $\mathcal{W}$.
\end{proof}

Finally, we are now in a position to prove the main result of this paper.

  \begin{proof}[Proof of Theorem \ref{mlthm:existence}]
		We follow the \emph{Direct Method of the Calculus of Variations}. Fix $\mathbf{m}= (m_0, \ldots,m_\alpha) \in \N^{\alpha+1}$ and let $\{(S_{h^\alpha_k,K^\alpha_k}, \ldots ,S_{h^0_k,K^0_k}, u_k)\} \subset \mathcal{C}^\alpha_{\mathbf{m}}$ be a minimizing sequence of $\mathcal{F}^\alpha_{\sigma}$ such that $\mathcal{L}^2({S_{h^i_k,K^i_k}})= \mathbbm{v}_i$ for $i = 0, \ldots, \alpha$, and
		$$\sup_{k\in\mathbb{N}} {\mathcal{F}^\alpha_{\sigma}(S_{h^\alpha_k,K^\alpha_k}, \ldots ,S_{h^0_k,K^0_k}, u_k)}< \infty.$$
		Since $\mathcal{L}^2({S_{h^\alpha_k,K^\alpha_k}})= \mathbbm{v}_\alpha$, by Theorem \ref{mlcompactness:Clambda} there exist a subsequence $\{(S_{h^\alpha_\kn,K^\alpha_\kn}, \ldots ,S_{h^0_\kn,K^0_\kn}, u_\kn)\}$, a sequence  $\{(S_{h^\alpha_\kn,\wt K^\alpha_n}, \ldots, S_{h^0_\kn,\wt K^0_n} , v_n)\}_{n \in \N} \subset \mathcal{C}^\alpha_{\mathbf{m}}$ and $(S_{h^\alpha,K^\alpha}, \ldots ,S_{h^0,K^0}, u) \in \mathcal{C}^\alpha_{{ \sigma},\mathbf{m}}$ such that 
	$$(S_{h^\alpha_\kn,\wt K^\alpha_n}, \ldots, S_{h^0_\kn,\wt K^0_n} , v_n) \xrightarrow{\tau_{\mathcal{C}^\alpha}} (S_{h^\alpha,K^\alpha}, \ldots ,S_{h^0,K^0}, u)$$
	 as $n \to \infty$ and
		\begin{equation}
			\label{mleq:existence1}
			\liminf_{n\to \infty}{\mathcal{F}^\alpha_{\sigma}(S_{h^\alpha_\kn,\wt K^\alpha_n}, \ldots, S_{h^0_\kn,\wt K^0_n} , v_n)} = \liminf_{n \to \infty}{\mathcal{F}^\alpha_{ \sigma} (S_{h^\alpha_\kn,K^\alpha_\kn}, \ldots ,S_{h^0_\kn,K^0_\kn}, u_\kn)}.
		\end{equation} 
		According to Theorem \ref{mlthm:lowersemicontinuitymultilayer}, we have that
		\begin{equation}
			\label{mleq:existance2}
			\mathcal{F}^\alpha_{ \sigma}(S_{h^\alpha,K^\alpha}, \ldots ,S_{h^0,K^0}, u) \le \liminf_{n \to \infty}{\mathcal{F}^\alpha_{\sigma}(S_{h^\alpha_\kn,\wt K^\alpha_n}, \ldots, S_{h^0_\kn,\wt K^0_n} ,v_n)}.
		\end{equation}
		We claim that for every $i =0, \ldots, \alpha $,  $S_{h^i_\kn, \wt K^i_n}$ and $S_{h^i, \wt K^i}$ satisfy the volume constraints of \eqref{mleq:const}. Indeed, by Theorem \ref{mlcompactness:Clambda}, for any $n  \in \N$, $\mathbbm{v}_i = \mathcal{L}^2({S_{h^i_\kn, K^i_\kn}} )=  \mathcal{L}^2({S_{h^i_\kn, \wt K^i_n}})$ for every $i =0, \ldots, \alpha $. Fix $i =0, \ldots,\alpha $. By definition of $\tau_{\B^\alpha}$-convergence and by 		
applying \cite[Lemma 3.2]{KP} we infer that $S_{h^i_\kn, \wt K^i_n} \to S_{h^i,K^i}$ in $L^1(\R^2)$ as $n \to \infty$, and thus $\mathcal{L}^2({S_{h^i,K^i}}) = \mathbbm{v}_i$. From \eqref{mleq:existence1} and \eqref{mleq:existance2}, we deduce that
		\begin{equation*}
			\begin{split}
				&\inf_{\small \begin{split}  
				(&S_{h^\alpha,K^\alpha}, \ldots ,S_{h^0,K^0}, u) \in \mathcal{C}^\alpha_{\mathbf{m}}, \\ & \mathcal{L}^2({S_{h^i,K^i}})= \mathbbm{v}_i,\, i = 0, \ldots, \alpha
				\end{split}
				 }{\mathcal{F}^\alpha _{\sigma}(S_{h^\alpha,K^\alpha}, \ldots ,S_{h^0,K^0}, u)} \\
    & \qquad = \lim_{n \to \infty}{\mathcal{F}^\alpha_{{\sigma}}(S_{h^\alpha_\kn,\wt K^\alpha_n}, \ldots, S_{h^0_\kn,\wt K^0_n} , u_{k_n})}\\
				& \qquad \ge \liminf_{n \to \infty}{\mathcal{F}^\alpha_{\sigma}(S_{h^\alpha_\kn,\wt K^\alpha_n}, \ldots, S_{h^0_\kn,\wt K^0_n} , u_{k_n})} \ge \mathcal{F}^\alpha_{\sigma}(S_{h^\alpha,K^\alpha}, \ldots ,S_{h^0,K^0}, u).
			\end{split}
		\end{equation*}
		We conclude from the previous inequality  that $(S_{h^\alpha,K^\alpha}, \ldots ,S_{h^0,K^0}, u)$ is a minimum of \eqref{mleq:const}. 
  
  The same strategy is   used to solve the unconstrained problem \eqref{mleq:uncost} thanks to the  extra observation that   for any minimizing sequence $\{(S_{h^\alpha_k,K^\alpha_k}, \ldots ,S_{h^0_k,K^0_k}, u_k)\} \subset \mathcal{C}^\alpha_{\mathbf{m}}$ of $\mathcal{F}^{\alpha,\bm{\lambda}}_{\sigma}$ such that 
		$$\sup_{k\in\mathbb{N}} {\mathcal{F}^{\alpha,\bm{\lambda}}_{\sigma}(S_{h^\alpha_k,K^\alpha_k}, \ldots ,S_{h^0_k,K^0_k}, u_k)}< \infty$$
  we have that
  $$
    \mathcal{L}^2(S_{h^\alpha_k,K^\alpha_k}) \le \abs{\mathcal{L}^2(S_{h^\alpha_k,K^\alpha_k}) - \mathbbm{v}_\alpha} +\mathbbm{v}_\alpha \le \frac1{\lambda_\alpha} \mathcal{F}^{\alpha,\bm{\lambda}}(S_{h^\alpha_k,K^\alpha_k}, \ldots ,S_{h^0_k,K^0_k}, u_k) +\mathbbm{v}_\alpha
  $$
  for every $k \in \N$.

  This concludes the proof. 
	\end{proof}


 \section*{Acknowledgments} 
 The authors acknowledge the support received from the Austrian Science Fund (FWF) projects P 29681 and TAI 293, from the Vienna Science and Technology Fund (WWTF) together with the City of Vienna and Berndorf Privatstiftung through Project MA16-005, and from BMBWF through the OeAD-WTZ project HR 08/2020.  R. Llerena thanks for the support obtained by a French public grant as part of the ``Investissement d'avenir'' project ANR-11-LABX-0056-LMH, LabEx LMH. P. Piovano is member of the Italian ``Gruppo Nazionale per l'Analisi Matematica, la Probabilit\`a e le loro Applicazioni'' (GNAMPA) and has received funding from the GNAMPA-INdAM 2022 project CUP: E55F22000270001 and 2023 Project CUP: E53C22001930001. P. Piovano also acknowledges the support obtained by the Italian Ministry of University and Research  (MUR) through the PRIN Project ``Partial differential equations and related geometric-functional inequalities''.  Finally,  P. Piovano is also grateful for the support received as \emph{Visiting Professor and Excellence Chair} and from the \emph{Visitor in the Theoretical Visiting Sciences Program} (TSVP) at the Okinawa Institute of Science and Technology (OIST), Japan. 
	

	\nocite{*}
	\bibliographystyle{amsplain}
	\bibliography{reference4.bib}
\end{document}